\documentclass[10pt,a4paper]{article}
\usepackage{geometry} % see geometry.pdf on how to lay out the page. There's lots.
\geometry{a4paper} % or letter or a5paper or ... etc
\usepackage{amssymb,amsmath,amsthm}
\usepackage[T1]{fontenc}
\usepackage{times,color}
\usepackage{url,multirow}
\usepackage{graphicx}
\usepackage{epstopdf,setspace}
\DeclareGraphicsRule{.tif}{png}{.png}{`convert #1 `dirname #1`/`basename #1 .tif`.png}
\allowdisplaybreaks

\def\de{{\rm d}}

\newtheorem{theorem}{Theorem}[section]

\newtheorem{lemma}{Lemma}[section]

\numberwithin{equation}{section}

% See the ``Article customise'' template for come common customisations

\title{On a family of test statistics for discretely observed  diffusion processes}
\author{A. De Gregorio\footnote{
Dipartimento di Scienze Statistiche,
P.le Aldo Moro 5,
 00185- Rome, Italy - alessandro.degregorio@uniroma1.it}\quad
 S.M. Iacus\footnote{Dipartimento di Scienze Economiche, Aziendali e Statistiche,
 Via Conservatorio 22, 20122 - Milan, Italy - stefano.iacus@unimi.it}}

 % delete this line to display the current date

%%% BEGIN DOCUMENT
\begin{document}

\maketitle

\begin{abstract}
We consider parametric hypotheses testing for multidimensional  ergodic diffusion processes observed at discrete
time. We propose a  family of  test statistics, related to the so called $\phi$-divergence measures. By taking into account the quasi-likelihood approach developed for studying the stochastic differential equations, it is proved that the tests in this family are all asymptotically distribution free. In other words, our test statistics weakly converge to the chi squared distribution. Furthermore, our test statistic is compared with the quasi likelihood ratio test. In the case of contiguous alternatives, it is also possible to study in detail the power function of the tests.

Although all the tests in this family are asymptotically equivalent, we show by Monte Carlo analysis that, in the small sample case, the performance of the test strictly depends on the choice of the function $\phi$. Furthermore, in this framework, the simulations show that there are not uniformly most powerful tests.
\end{abstract}

{\it Keywords}:  discrete observations, distribution free tests, generalized likelihood ratio tests, parametric hypotheses testing, quasi-likelihood functions, stochastic differential equations

\section{Introduction}
In the last years there has been a growing interest around  diffusion processes defined by means of stochastic differential equations. Indeed, diffusion models are useful for describing the random evolution of real phenomena studied, for instance, in physics and biology. These random processes have often been applied in mathematical finance and econometric theory to describe the behavior of stock prices, exchange rates, interest rates. Although, the underlying random model evolves continuously in time, the time data are always recorded at discrete instants (e.g. weekly, daily or each minute). For these reasons the inference problems for discretely observed diffusion processes have been tackled by several authors in order to provide useful statistical tools for the applied researchers and practitioners.

Let $X_t$, $t\in[0,T]$,  be a $d$-dimensional diffusion process solution of the following $d$-dimensional stochastic differential equation
$\de X_t = b(\alpha, X_t) \de t + \sigma(\beta,X_t)  \de W_t$, where
 functions $b$ and $\sigma$ are suitably regular and known up to  the parameters $\alpha\in \mathbb{R}^p$ and $\beta\in\mathbb{R}^q$. The process $X_t$ is discretely observed at times  $t_i$, such that $t_i-t_{i-1}=\Delta_n<\infty$ for $1\leq i\leq n $. In order to test the parametric vector $\theta=(\alpha,\beta)$ of the process $X_t$, $t\in[0,T]$, this paper proposes the construction of a family of test statistics for the following hypotheses testing problem
$$H_0:\theta=\theta_0\quad \text{versus}\quad H_1:\theta\neq \theta_0.$$

The problem of testing parametric hypotheses for diffusion processes is still a developing stream of research.
In continuos time, Kutoyants (2004) and Dachian and Kutoyants (2008) considered the problem  for ergodic diffusion models; Kutoyants (1984) considered the same problem for small diffusion processes. In discrete time, Lee and Wee (2008) dealt with a parametric version of the score marked empirical process test statistics, while
A\"it-Sahalia (1996), Giet and Lubrano (2008) and Chen {\it
et al.} (2008) proposed tests based on the several distances between parametric and nonparametric estimation of the invariant
density of ergodic diffusion processes.

The test statistics introduced in this paper has been inspired by the $\phi$-divergence theory that we recall briefly in the following. Let   $\{ p(X,\theta),
\theta\in\Theta\}$ be a family of probability densities. Denote by $E_\theta$  the expected value with respect to $ P_\theta$, the true law of
the observations $X$. Let $\phi:[0,\infty)\to \mathbb{R}$ be a convex and continuous function. Furthermore, its restriction on $(0,\infty)$ is finite,  two times continuously differentiable and such that $\phi(1)=\phi'(1)=0$ and $\phi''(1)=1$. Then the $\phi$-divergence measure between the two models $p(X,\theta)$ and $p(X,\theta_0)$, $\theta\neq\theta_0$, is defined as
\begin{equation}
\label{eq:div}
D_\phi(\theta,\theta_0)=E_{\theta_0}\phi\left(\frac{p(X,\theta)}{p(X,\theta_0)}\right)
\end{equation}

We remind that $\phi$-divergences are contain as special cases many divergences like the $\alpha$-divergences (Csisz\'ar, 1967) 
%and Amari,
%1985?), 
the  Kullback-Leibler and the  Hellinger divergences 
%(see, e.g., Beran, 1977?, Simpson, 1989?), 
the R\'enyi's divergence, 
%(R\'enyi, 1961)?, 
the power-divergences studied in Cressie
and Read (1984).
%Liese and Vajda (1987) provide extensive study of a modified version of $\alpha$-divergences, 

The $\phi$-divergence measures have been used for testing hypotheses in parametric models. The reader can consult on this point, for example Morales {\it et al.}  (1997) and Pardo (2006). Morales {\it et al.} (2004) applied modified R\'enyi's divergence for testing problems on the family of exponential models.

Given a sample of $n$ independent and identically distributed (i.i.d.) observations and some asymptotically efficient estimator $\hat\theta_n$, to test
 $H_0 : \theta=\theta_0$ against $H_1 : \theta\neq \theta_0$ the $\phi$-divergence test statistic is given by $D_\phi(\hat\theta_n,\theta_0)$.
For a one-dimensional diffusion process with $\beta=\beta^*$ assumed known, the $\phi$-divergence is formally given by
\begin{equation}\label{eq:divint}
D_\phi(\theta,\theta_0)=\int\phi\left(\frac{\de P_\theta}{\de P_{\theta_0}}\right)\de P_{\theta_0}
\end{equation}
where 

$$\frac{\de P_\theta}{\de P_{\theta_0}}=\exp\left\{\int_0^T\frac{ b(\alpha, X_t) - b(\alpha_0, X_t) }{\sigma^2(\beta^*,X_t) }\de X_t-\frac12\int_0^T\frac{b^2(\alpha, X_t) - b^2(\alpha_0, X_t)}{\sigma(\beta^*,X_t) }\de t\right\}.$$
The study of $\phi$-divergences for continuous time observations of diffusion processes has been considered in
 Vajda (1990). Explicit derivations of the R\'enyi information on the invariant law of  ergodic diffusion processes have been presented in De Gregorio and Iacus (2009). K\"uchler and S\o rensen  (1997) provide several results on the likelihood ratio test statistics  statistics for exponential families of diffusion processes.
For small diffusion processes, Uchida and Yoshida (2004)  derived information criteria using Malliavin calculus.
For discrete time observations, R\'enyi divergence measures has been considered in
Rivas {\it et al.} (2005).

Formula \eqref{eq:divint} is not useful for testing problems on discretely observed diffusion processes. Therefore, we take into account an alternative approach. Let us consider the following  statistic
$$\mathcal{D}_{\phi,n}(\theta,\theta_0)=\frac{1}{n}\sum_{i=1}^n\phi\left(\frac{p_i(\theta) }{p_i(\theta_0)}\right)
$$
where $p_i(\cdot)$ is a suitable (gaussian) approximation of the transition density of the process $X_t$ from $X_{t_{i-1}}$ to $X_{t_i}$.
Notice that the function $\mathcal{D}_{\phi,n}$ is not a true $\phi$-divergence, nor a proper approximation of it, but its behaviour is studied in Section 4.

Let $\hat\theta_n$ be any consistent estimator of $\theta$, then
the family of test statistics is defined as follows
$T_{\phi,n}(\hat\theta_n,\theta_0):=2n\mathcal{D}_{\phi,n}(\hat\theta_n,\theta_0)$.
By exploiting the quasi-likelihood approach developed by Genon-Catalot and Jacod (1993), Florens-Zmirou (1993), Kessler (1997) and Yoshida (1992, 2011), we derive the asymptotic distribution of the tests under the null hypothesis and under the case of contiguous alternatives.

The paper is organized as follows. In Section \ref{sec:model} we introduce the notations, the model, the regularity conditions and the asymptotic framework. Section \ref{sec:preliminary} contains preliminary results needed in Section \ref{sec:pseudo} where the family of test statistics are introduced
and studied. Section \ref{sec:numerics} is devoted to numerical analysis of the performance of the tests for small sample sizes. The methodology developed in this paper can be applied to other diffusion models, Section \ref{ext} discuss this point. The Appendix contains some auxiliary but useful results for the proofs of this presented in this work. The tables are collected at the end of the paper.

\section{Notation and basic assumptions}\label{sec:model}

Let $X_t$, $t\in[0,T]$,  be a $d$-dimensional diffusion process solution of the following multivariate stochastic differential equation
\begin{equation}\label{eq:sde}
\de X_t = b(\alpha, X_t) \de t + \sigma(\beta,X_t)  \de W_t,\quad X_0=x_0,
\end{equation}
 where $\alpha=(\alpha_1,...,\alpha_{p})'\in\Theta_p\subset \mathbb{R}^p$, $p\geq1$, $\beta=(\beta_1,...,\beta_q)'\in\Theta_q\subset \mathbb{R}^q$, $q\geq1$, are $p\times 1$ and $q\times 1$ vectors respectively, $b:\Theta_p\times\mathbb{R}^d\to \mathbb{R}^d$, $\sigma:\Theta_q\times \mathbb{R}^d\to \mathbb{R}^d\times \mathbb{R}^m$ and $\{W_t,0 \leq t\leq T\}$, is a standard Brownian motion in $\mathbb{R}^m$. We assume that the functions $b$ and $\sigma$ are known up to  the parameters $\alpha$ and $\beta$. 
 
The sample path of $X_t$ is observed only at $n+1$ equidistant discrete times $t_i$, such that $t_i-t_{i-1}=\Delta_n<\infty$ for $1\leq i\leq n $ (with $t_0=0$ and $t_{n}=T$). The asymptotic scheme adopted in this paper is the following: $T=n\Delta_n\to \infty$, $\Delta_n\to 0$ and $n\Delta_n^2\to 0$ as $n\to \infty$. The previous scheme is called rapidly increasing design, where the number of observations grows over the time but no so fast.

The following notations will be used throughout the rest of the paper:
\begin{itemize}
\item We denote by $\theta=(\alpha,\beta)\in\Theta_p\times \Theta_q=\Theta$ the $(p+q)\times 1$ parametric vector and with $\theta_0=(\alpha_0,\beta_0)$ its unknown true value. The parameter space $\Theta$ is a compact set of $ \mathbb{R}^{p+q}$. 

\item ${\bf X}_n=\{X_{t_i}\}_{0\leq i\leq n}$ represents our random sample with values in $\mathbb{R}^{(n+1)\times d}$.

\item For a matrix $A$, we denote by $A^{-1}$ the inverse of $A$ and by $|A|^2=\text{tr}(AA')$, i.e. the sum of squares of the elements on the diagonal of matrix $A$. 
\item We set $\Sigma(\beta,x)=\sigma(\beta,x)\sigma(\beta,x)'$, $\Xi(\beta,x)=\Sigma^{-1}(\beta,x)$ and $\overline{X}_i(\alpha)=X_{t_i}-X_{t_{i-1}}-\Delta_n b(\alpha,X_{t_i})$. 

\item For $\mu=(\mu_1,...,\mu_m)$, $\partial_{\mu_k}:=\frac{\partial}{\partial \mu_k}$,  $\partial^2_{\mu_k}:=\frac{\partial^2}{\partial\mu_k^2}$,  $\partial_{\mu_k\mu_k'}^2:=\frac{\partial^2}{\partial \mu_k\partial\mu_k'}$,  $\partial_\mu=(\partial_{\mu_1},...,\partial_{\mu_m})'$ and $\partial_\mu^2=[\partial_{\mu_k\mu_k'}^2]_{k,k'=1,...,m}$ denotes the Hessian matrix.

\item If $f:\Theta\times\mathbb{R}^d\to \mathbb{R}$, we denote by $f_i(\theta)$ the value $f(\theta, X_{t_i})$; for example $\Sigma_i(\beta)=\Sigma(\beta,X_{t_i})$. Furthermore, if $f$ is a tensor, we indicate with the upper index its components: when $f$ is a matrix $f^{l,m}$ represents its $(l,m)$-component.

\item  For $0\leq i\leq n$, $t_i=i\Delta_n$, $\mathcal{G}_i^n=\sigma(W_s,s\leq t_i)$. 

\item Let $u_n$ be a $\mathbb{R}$-valued sequence. We indicate by $R$ a function $\Theta\times\mathbb{R}^d\to \mathbb{R}$ for which there exists a constant $C$ such that
$$R(\theta,u_n,x)\leq u_nC(1+|x|)^C,\quad \theta\in\Theta, x\in \mathbb{R}^d, n\in\mathbb{N}.$$

\end{itemize}

We need  some assumptions on the regularity of the process $X_t,t \in[0,T]$:
\begin{itemize}
\item[$\mathcal A_1.$]  there exists a constant $C$ such that
$$|b(\alpha_0,x)-b(\alpha_0,y)|+|\sigma(\beta_0,x)-\sigma(\beta_0,y)|\leq C|x-y|;$$

\item[$\mathcal A_2.$] $\inf_{\beta,x}\det(\Sigma(\beta,x))>0$;

\item[$\mathcal A_3.$] the process $X_t,t \in[0,T],$ is ergodic for $\theta=\theta_0$ with
invariant probability measure $\mu_\theta$. Thus
$$\frac1T\int_0^Tf(X_t)\de t\stackrel{P_\theta}{\to}\int f(x)\mu_\theta(\de x)$$
as $T\to\infty$, where $f$ is a $\mu_\theta$-integrable function. 

\item[$\mathcal A_4.$] if the coefficients $b(\alpha,x)=b(\alpha_0,x)$ and
$\sigma(\beta,x)=\sigma(\beta_0,x)$  for all
$x$  ($\mu_{\theta_0}$-almost surely), then $\alpha=\alpha_0$ and $\beta=\beta_0$;

\item[$\mathcal A_5.$]  for all $m\geq 0$ and for all $\theta\in\Theta$, $\sup_t
E|X_t|^m<\infty$;

\item[$\mathcal A_6.$] for every $\theta\in\Theta$, the coefficients $b(\alpha,x)$ and
$\sigma(\beta,x)$ are five times continuously differentiable with respect to $x$ and
the derivatives are bounded by a polynomial function in $x$, uniformly in
$\theta$;

\item[$\mathcal A_7.$]  the coefficients $b(\alpha,x)$ and $\sigma(\beta,x)$
and all their partial derivatives with respect to $x$ up to order 2 are
three times continuously differentiable with respect to $\theta$ for all $x$ in the
state space. All derivatives with respect to $\theta$ are bounded by a polynomial
function in $x$, uniformly in $\theta$.
\end{itemize}

We observe that the assumption $\mathcal{A}_1$ ensures the existence and uniqueness of a solution to \eqref{eq:sde} for the value $\theta_0=(\alpha_0,\beta_0)$ of $\theta\in\Theta$, while $\mathcal A_4$ is the identifiability condition.
Hereafter, we assume that the conditions $\mathcal{A}_1-\mathcal{A}_7 $ hold. These conditions are equivalent to the ones in Uchida and Yoshida (2005)  and Kessler (1997) for what concerns the regularity of the model.

\section{Preliminary results}\label{sec:preliminary}

Since the transition density between $X_{t_{i-1}}$ and $X_{t_i}$ is almost always unknown, for the estimation of stochastic differential equations it has been developed an alternative tool with respect to the likelihood function. Therefore, in order to introduce the test statistics for the stochastic differential equation \eqref{eq:sde}, we consider a quasi-likelihood approach based on a suitable contrast function. In other words, we deal with the negative quasi-loglikelihood function $\mathbb{H}_n:\mathbb{R}^{(n+1)\times d}\times \Theta\to \mathbb{R}$ 
\begin{align}\label{qlik}
\mathbb{H}_n({\bf X}_n,\theta)
:=\sum_{i=1}^n \mathbb{H}_i(\theta)
:=\frac12\sum_{i=1}^n\left\{\log\det(\Sigma_{i-1}(\beta))
+\frac{
1}{\Delta_n}\overline{X}_i'(\alpha)\Xi_{i-1}(\beta)\overline{X}_i(\alpha)\right\}.
\end{align}
The function \eqref{qlik} is obtained by discretization of the continuous time stochastic differential equation \eqref{eq:sde} by Euler-Maruyama scheme, that is
\begin{align*}
X_{t_i}-X_{t_{i-1}}&=\int_{t_{i-1}}^{t_i}b(\alpha, X_s)\de s+\int_{t_{i-1}}^{t_i}\sigma(\beta,X_s)\de W_s\\
&\cong b(\alpha, X_{t_{i-1}})\Delta_n+\sigma(\beta,X_{t_{i-1}})(W_{t_i}-W_{t_{i-1}})
\end{align*}
and the increments $(X_{t_i}-X_{t_{i-1}})$ are (approximately) conditionally independent Gaussian random variables for $i=1,...,n$.

The quasi-likelihood \eqref{qlik} has been used by, e.g.,  Florens-Zmirou (1993), Yoshida (1992, 2011), Genon-Catalot and Jacod (1993)   and Kessler (1997) to make inference for stochastic differential equations. This last author considered a more general asymptotic scheme, that is $n\Delta_n^p\to 0,p\geq 2,$ and generalizes the contrast function \eqref{qlik} improving the convergence results. For the sake of simplicity we focus our attention to the case $n\Delta_n^2\to 0$. 

Let $\hat\theta_n:\mathbb{R}^{(n+1)\times d}\to \Theta$ be the quasi-maximum likelihood estimator of $\theta\in\Theta$, based on \eqref{qlik}, that is
$$
\hat{\theta}_n=(\hat\alpha_n,\hat\beta_n)=\arg\min_\theta \mathbb{H}_n({\bf X}_n,\theta).
$$ 
and let us consider the matrix
$$\varphi(n)=\left(%
\begin{array}{cc}
  \frac{1}{n{\Delta_n}}{\bf I}_p& 0 \\
  0 &  \frac{1}{n}{\bf I}_q\\
\end{array}%
\right)$$
where ${\bf I}_p$ and ${\bf I}_q$ are respectively the identity matrix of order $p$ and $q$.
The regularity conditions $\mathcal A_1 - \mathcal A_7$ imply some fundamental results which have a crucial role for analyzing the asymptotic distributional behavior of the estimators (and thus of our test statistics as we will show in the next Section). Indeed, as shown in Kessler (1997) and Yoshida (2011),
$\hat\theta_n$ is  a consistent  estimator of $\theta_0$ and asymptotically Gaussian with rate of convergence given by $\varphi(n)^{-1/2}$, i.e.
\begin{equation}\label{eq:conest}
\varphi(n)^{-1/2}(\hat\theta_n-\theta_0)\stackrel{d}{\to}N(0,\mathcal{I}(\theta_0)^{-1}).
\end{equation}
where 
$\mathcal I(\theta_0)$ is the positive definite and invertible
Fisher information matrix at $\theta_0$  given by
$$\mathcal I(\theta_0)=\left(%
\begin{array}{cc}
 [\mathcal I_b^{j,k}(\theta_0)]_{j,k=1,...,p} & 0 \\
  0 & [\mathcal I_\sigma^{j,k}(\theta_0)]_{j,k=1,...,q} \\
\end{array}%
\right)$$ where
\begin{align*}
&\mathcal I_b^{j,k}(\theta_0)=\int\left(\partial_{\alpha_j} b(\alpha_0,x)\right)'\Xi(\beta_0,x)\partial_{\alpha_k} b(\alpha_0,x)
\mu_{\theta_0}(\de x)\,,\\
&\mathcal I_\sigma^{j,k}(\theta_0)=\frac12\int\text{tr}\left[\partial_{\beta_j} \Sigma(\beta_0,x)\Xi(\beta_0,x)\partial_{\beta_k} \Sigma(\beta_0,x)\Xi(\beta_0,x)\right]
\mu_{\theta_0}(	\de x)\,.
\end{align*}
The matrix $\varphi(n)$ plays the role of the rate of convergence in the estimation problem for the stochastic differential equation \eqref{eq:sde}.

The Bayes type estimator $\tilde\theta_n=(\tilde\alpha_n,\tilde\beta_n)$ for $\theta$ is defined by 
\begin{align*}
\tilde\beta_n&=\left\{\int\exp\left(\mathbb{H}_n({\bf X}_n,(\alpha^*,\beta))\right)\pi_1(\de\beta)\right\}^{-1}\times\int\beta\exp\left(\mathbb{H}_n({\bf X}_n,(\alpha^*,\beta))\right)\pi_1(\de\beta)\\
\tilde\alpha_n&=\left\{\int\exp\left(\mathbb{H}_n({\bf X}_n,(\alpha,\tilde\beta_n))\right)\pi_2(\de\alpha)\right\}^{-1}\times\int\alpha\exp\left(\mathbb{H}_n({\bf X}_n,(\alpha,\tilde\beta_n))\right)\pi_2(\de\alpha)
\end{align*}
where $\alpha^*$ is an arbitrary constant and $\pi_1(\beta)$ and $\pi_2(\alpha)$ are the prior distributions of $\beta$ and $\alpha$ respectively. Yoshida (2011) proved that $\tilde\theta_n$ is a consistent estimator for $\theta_0$ and that the weak convergence \eqref{eq:conest} still holds for this estimator. 
Other classes of estimators with similar asymptotic properties exist in the literature, for a recent review see S\o rensen (2004).

The next result on the score function will be useful in the study of the asymptotic behavior of the  test statistics.

\begin{lemma}\label{lem1}
Let  $\Lambda_n(\theta)$ be a $(p+q)\times(p+q)$ matrix with $(j,k)$-component given by 

$$\Lambda_n^{j,k}(\theta)=\sum_{i=1}^n\partial_{\theta_j}\mathbb{H}_i(\theta)\partial_{\theta_k}\mathbb{H}_i( \theta),\quad j,k=1,2,...,p+q.$$
Under the conditions $\mathcal A_1 - \mathcal A_7$, the following property holds true
\begin{equation}\label{eq:conqlik}
 \varphi(n)^{1/2}\Lambda_n(\theta_0)  \varphi(n)^{1/2}\stackrel{P_{\theta_0}}{\to}\mathcal{I}(\theta_0).
\end{equation}
\end{lemma}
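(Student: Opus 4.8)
The plan is to exploit the block structure of both $\varphi(n)$ and $\mathcal{I}(\theta_0)$ and to treat the drift--drift block ($1\le j,k\le p$), the diffusion--diffusion block ($p<j,k\le p+q$) and the drift--diffusion cross block separately, reducing each entry to a law of large numbers for a triangular array of $\mathcal{G}_i^n$-adapted summands. The basic tool is the standard convergence criterion of Genon-Catalot and Jacod (1993): if $\chi_i^n$ is $\mathcal{G}_i^n$-measurable with $\sum_{i=1}^n\E_{\theta_0}[\chi_i^n\mid\mathcal{G}_{i-1}^n]\stackrel{P_{\theta_0}}{\to}U$ and $\sum_{i=1}^n\E_{\theta_0}[(\chi_i^n)^2\mid\mathcal{G}_{i-1}^n]\stackrel{P_{\theta_0}}{\to}0$, then $\sum_{i=1}^n\chi_i^n\stackrel{P_{\theta_0}}{\to}U$. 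First I would compute the two families of first derivatives. Writing $\Delta_iW:=W_{t_i}-W_{t_{i-1}}$ and using, under $P_{\theta_0}$, the It\^o--Taylor expansion $\overline{X}_i(\alpha_0)=\sigma_{i-1}(\beta_0)\,\Delta_iW+R(\theta_0,\Delta_n,X_{t_{i-1}})$, a direct computation (together with $\partial_{\beta_j}\log\det\Sigma=\text{tr}(\Xi\,\partial_{\beta_j}\Sigma)$ and $\partial_{\beta_j}\Xi=-\Xi(\partial_{\beta_j}\Sigma)\Xi$, and with the harmless replacement of $X_{t_i}$ by $X_{t_{i-1}}$ in the drift derivative absorbed into $R$) gives
\begin{align*}
\partial_{\alpha_j}\mathbb{H}_i(\theta_0)&=-\bigl(\partial_{\alpha_j}b(\alpha_0,X_{t_{i-1}})\bigr)'\Xi_{i-1}(\beta_0)\,\overline{X}_i(\alpha_0)+R(\theta_0,\Delta_n,X_{t_{i-1}}),\\
\partial_{\beta_j}\mathbb{H}_i(\theta_0)&=\frac12\,\text{tr}\!\left(\Xi_{i-1}\,\partial_{\beta_j}\Sigma_{i-1}\right)-\frac{1}{2\Delta_n}\,\overline{X}_i'(\alpha_0)\,\Xi_{i-1}\bigl(\partial_{\beta_j}\Sigma_{i-1}\bigr)\Xi_{i-1}\,\overline{X}_i(\alpha_0).
\end{align*}

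For the drift block I would set $\chi_i^n=(n\Delta_n)^{-1}\partial_{\alpha_j}\mathbb{H}_i(\theta_0)\partial_{\alpha_k}\mathbb{H}_i(\theta_0)$. Substituting $\overline{X}_i(\alpha_0)\approx\sigma_{i-1}\Delta_iW$ and using $\E_{\theta_0}[\Delta_iW\,\Delta_iW'\mid\mathcal{G}_{i-1}^n]=\Delta_n{\bf I}_m$ together with $\Xi\sigma\sigma'\Xi=\Xi$, the leading conditional mean is
\begin{equation*}
\E_{\theta_0}[\chi_i^n\mid\mathcal{G}_{i-1}^n]=\frac1n\bigl(\partial_{\alpha_j}b(\alpha_0,X_{t_{i-1}})\bigr)'\Xi_{i-1}\,\partial_{\alpha_k}b(\alpha_0,X_{t_{i-1}})+R(\theta_0,\Delta_n,X_{t_{i-1}}).
\end{equation*}
Writing $\frac1n\sum_{i=1}^n f(X_{t_{i-1}})=\frac1T\sum_{i=1}^n\Delta_n f(X_{t_{i-1}})\approx\frac1T\int_0^Tf(X_s)\de s$, the ergodic theorem $\mathcal{A}_3$ yields $\sum_i\E_{\theta_0}[\chi_i^n\mid\mathcal{G}_{i-1}^n]\stackrel{P_{\theta_0}}{\to}\mathcal{I}_b^{j,k}(\theta_0)$. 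Since each drift score is of order $\sqrt{\Delta_n}$, the product of two is $O(\Delta_n)$, so $\sum_i\E_{\theta_0}[(\chi_i^n)^2\mid\mathcal{G}_{i-1}^n]=O(1/n)\to0$ and the criterion applies.

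For the diffusion block, taking $\chi_i^n=n^{-1}\partial_{\beta_j}\mathbb{H}_i(\theta_0)\partial_{\beta_k}\mathbb{H}_i(\theta_0)$, the score is of order $1$ with conditional mean vanishing to leading order; writing $\overline{X}_i\overline{X}_i'/\Delta_n=\sigma_{i-1}(\Delta_iW\,\Delta_iW'/\Delta_n)\sigma_{i-1}'$ and applying the Gaussian fourth-moment (Wick) identity $\E[(ZZ'-{\bf I})_{ab}(ZZ'-{\bf I})_{cd}]=\delta_{ac}\delta_{bd}+\delta_{ad}\delta_{bc}$ for $Z=\Delta_iW/\sqrt{\Delta_n}\sim N(0,{\bf I}_m)$, the products collapse to
\begin{equation*}
\E_{\theta_0}[\chi_i^n\mid\mathcal{G}_{i-1}^n]=\frac{1}{2n}\,\text{tr}\!\left(\Xi_{i-1}\,\partial_{\beta_j}\Sigma_{i-1}\,\Xi_{i-1}\,\partial_{\beta_k}\Sigma_{i-1}\right)+R(\theta_0,\Delta_n,X_{t_{i-1}}),
\end{equation*}
which sums to $\mathcal{I}_\sigma^{j,k}(\theta_0)$ by the same ergodic argument, the conditional second moments being again $O(1/n)$. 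Finally, for the cross block I would take $\chi_i^n=(n\sqrt{\Delta_n})^{-1}\partial_{\alpha_j}\mathbb{H}_i(\theta_0)\partial_{\beta_k}\mathbb{H}_i(\theta_0)$: the drift factor is odd and the diffusion factor even in $\Delta_iW$, so the third Gaussian moment forces the conditional mean to vanish to leading order, giving $\sum_i\E_{\theta_0}[\chi_i^n\mid\mathcal{G}_{i-1}^n]\to0$, while $\sum_i\E_{\theta_0}[(\chi_i^n)^2\mid\mathcal{G}_{i-1}^n]=O(1/n)\to0$. This yields the zero off-diagonal blocks and hence \eqref{eq:conqlik}.

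\textbf{Main obstacle.} The delicate part is not the algebra of the leading terms but the uniform control of the It\^o--Taylor remainders. One must show that replacing $\overline{X}_i(\alpha_0)$ by $\sigma_{i-1}(\beta_0)\Delta_iW$ and $X_{t_i}$ by $X_{t_{i-1}}$ in the coefficients produces errors of type $R(\theta_0,\Delta_n,x)$ that, after multiplying two scores and applying the relevant scaling, stay negligible both in conditional mean and in conditional second moment; this rests on the moment bounds $\mathcal{A}_5$ and the polynomial growth of the derivatives in $\mathcal{A}_6$--$\mathcal{A}_7$, and it is precisely here that the regime $T=n\Delta_n\to\infty$, $\Delta_n\to0$, $n\Delta_n^2\to0$ is used to keep every discarded term of strictly smaller order. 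The second point requiring care is the fourth-moment computation in the diffusion block, which must be carried out in the multivariate (matrix) setting to produce exactly the factor $\tfrac12$ and the trace form defining $\mathcal{I}_\sigma^{j,k}(\theta_0)$.
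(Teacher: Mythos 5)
Your proposal is correct and follows essentially the same route as the paper: the same block decomposition into drift, diffusion and cross components, the same Genon-Catalot--Jacod conditional-moment criterion, the same conditional moment expansions of $\overline{X}_i$ (you phrase them via $\overline{X}_i\approx\sigma_{i-1}\Delta_iW$ and Wick's formula, the paper via its Lemma on conditional moments, which is the identical computation), and the same ergodic averaging step. The only differences are notational (matrix/trace form versus explicit index sums), and your identification of the remainder control as the delicate point matches where the paper invokes its auxiliary lemmas.
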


\begin{proof}
For the sake of simplicity we denote by
$\overline{X}_i:=\overline{X}_i(\alpha)$, $b_{i-1}:=b_{i-1}(\alpha)$, $\Xi_{i-1}:=\Xi_{i-1}(
 \beta)$. In order to prove 	\eqref{eq:conqlik}, we deal with the following expressions
\begin{align}
&\partial_{\alpha_j}\mathbb{H}_i(\theta)=\sum_{l,m=1}^d\partial_{\alpha_j}b_{i-1}^{l}\Xi_{i-1}^{l,m}\overline{X}_i^{m},\quad j=1,...,p,
\\
&\partial_{\beta_k}\mathbb{H}_i(\theta)=\frac12\left\{\sum_{l,m=1}^d\frac{\partial_{\beta_k}\Xi_{i-1}^{l,m}}{\Delta_n}\overline{X}_i^{m}\overline{X}_i^{l}+\frac{\partial_{\beta_k}\det(\Sigma_{i-1})}{\det(\Sigma_{i-1})}\right\},\quad k=1,...,q,
 \end{align}
and show that
 \begin{align}
&\frac{1}{n\Delta_n}\sum_{i=1}^n \partial_{\alpha_j}\mathbb{H}_i(\theta_0)\partial_{\alpha_k}\mathbb{H}_i(\theta_0)\stackrel{P_{\theta_0}}{\to}\mathcal I_b^{j,k}(\theta_0)\label{eq:condrift}\\
&\frac{1}{n}\sum_{i=1}^n \partial_{\beta_j}\mathbb{H}_i(\theta_0)\partial_{\beta_k}\mathbb{H}_i(\theta_0)\stackrel{P_{\theta_0}}{\to}\mathcal I_\sigma^{j,k}(\theta_0)\label{eq:condsigma}\\
&\frac{1}{n\sqrt{\Delta_n}}\sum_{i=1}^n\partial_{\alpha_j}\mathbb{H}_i(\theta_0)\partial_{\beta_k}\mathbb{H}_i(\theta_0)\stackrel{P_{\theta_0}}{\to}0\label{eq:condcross}
 \end{align}
 by means of Lemma \ref{lemmaGJ}.

Let us start proving the result \eqref{eq:condrift}. Bearing in mind Lemma \ref{lem0}-\ref{lemmaer}, we get that
 \begin{align}\label{eq:condrift1}
 &\frac{1}{n\Delta_n}\sum_{i=1}^n\left.E_{\theta_0}\{ \partial_{\alpha_j}\mathbb{H}_i(\theta_0)\partial_{\alpha_k}\mathbb{H}_i(\theta_0)\right|\mathcal{G}_{i-1}^n\}\\
& =\frac{1}{n\Delta_n}\sum_{i=1}^n\left.E_{\theta_0}\left\{\left(\sum_{l,m=1}^d\partial_{\alpha_j}b_{i-1}^{l}\Xi_{i-1}^{l,m}\overline{X}_i^{m}\right)\left(\sum_{l',m'=1}^d\partial_{\alpha_k}b_{i-1}^{l'}\Xi_{i-1}^{l',m'}\overline{X}_i^{m'}\right)\right|\mathcal{G}_{i-1}^n\right\}\notag\\
 &=\frac{1}{n\Delta_n}\sum_{i=1}^n\sum_{l,m=1}^d\sum_{l',m'=1}^d  \Xi_{i-1}^{l,m} \Xi_{i-1}^{l',m'}\partial_{\alpha_j}b_{i-1}^{l}\partial_{\alpha_k}b_{i-1}^{l'}\left.E_{\theta_0}\left\{\overline{X}_i^{m}\overline{X}_i^{m'}\right|\mathcal{G}_{i-1}^n\right\}\notag\\
&=\frac{1}{n}\sum_{i=1}^n\sum_{l,m=1}^d\sum_{l',m'=1}^d  \Xi_{i-1}^{l,m} \Xi_{i-1}^{l',m'}\Sigma_{i-1}^{m,m'}\partial_{\alpha_j}b_{i-1}^{l}\partial_{\alpha_k}b_{i-1}^{l'}+\frac1n\sum_{i=1}^nR(\theta_0,\Delta_n,X_{i-1})
\notag\\
&\stackrel{P_{\theta_0}}{\to}\sum_{l,m=1}^d\sum_{l',m'=1}^d \int \Xi^{l,m} \Xi^{l',m'}\Sigma^{m,m'}\partial_{\alpha_j}b^{l}\partial_{\alpha_k}b^{l'}\mu_{\theta_0}(\de x)=\mathcal I_b^{j,k}(\theta_0)
\notag
 \end{align}
and
  \begin{align}\label{eq:condrift2}
 &\frac{1}{n^2\Delta_n^2}\sum_{i=1}^nE_{\theta_0}\{(\partial_{\alpha_j}\mathbb{H}_i(\theta_0)\partial_{\alpha_k}\mathbb{H}_i(\theta_0))^2|\mathcal{G}_{i-1}^n\} \notag\\
 &=\frac{1}{n^2\Delta_n^2}\sum_{i=1}^n\left.E_{\theta_0}\left\{\left(\sum_{l,m=1}^d \sum_{l',m'=1}^d  \Xi_{i-1}^{l,m} \Xi_{i-1}^{l',m'}\partial_{\alpha_j}b_{i-1}^{l}\partial_{\alpha_k}b_{i-1}^{l'}\overline{X}_i^{m}\overline{X}_i^{m'}\right)^2\right|\mathcal{G}_{i-1}^n\right\}\notag\\
& \leq \frac{1}{n^2\Delta_n^2}\sum_{i=1}^n\sum_{l,m=1}^d\sum_{l',m'=1}^d  (\Xi_{i-1}^{l,m} \Xi_{i-1}^{l',m'}\partial_{\alpha_j}b_{i-1}^{l}\partial_{\alpha_k}b_{i-1}^{l'})^2\left.E_{\theta_0}\left\{(\overline{X}_i^{m})^2(\overline{X}_i^{m'})^2\right|\mathcal{G}_{i-1}^n\right\}\notag\\
&=\frac{1}{n^2}\sum_{i=1}^n\sum_{l,m=1}^d\sum_{l',m'=1}^d \left[ \left(\Xi_{i-1}^{l,m} \Xi_{i-1}^{l',m'}\partial_{\alpha_j}b_{i-1}^{l}\partial_{\alpha_k}b_{i-1}^{l'}\right)^2\left(\Sigma_{i-1}^{m,m}\Sigma_{i-1}^{m',m'}+2\left(\Sigma_{i-1}^{m,m'}\right)^2+R(\theta_0,\Delta_n^3,X_{i-1})\right)\right]\notag\\
&\stackrel{P_{\theta_0}}{\to}0
\end{align}

The proof of the result \eqref{eq:condsigma} is developed as follows
\begin{align*}
&\frac{1}{n}\sum_{i=1}^nE_{\theta_0}\{ \partial_{\beta_j}\mathbb{H}_i(\theta_0)\partial_{\beta_k}\mathbb{H}_i(\theta_0)|\mathcal{G}_{i-1}^n\}\notag\\
&=\frac{1}{4n}\sum_{i=1}^nE_{\theta_0}\Bigg\{\left[\sum_{l,m=1}^d\frac{\partial_{\beta_j}\Xi_{i-1}^{l,m}}{\Delta_n}\overline{X}_i^{m}\overline{X}_i^{l}+\frac{\partial_{\beta_j}\det(\Sigma_{i-1})}{\det(\Sigma_{i-1})}\right]\left[\sum_{l',m'=1}^d\frac{\partial_{\beta_k}\Xi_{i-1}^{l',m'}}{\Delta_n}\overline{X}_i^{m'}\overline{X}_i^{l'}+\frac{\partial_{\beta_k}\det(\Sigma_{i-1})}{\det(\Sigma_{i-1})}\right] |\mathcal{G}_{i-1}^n\Bigg\}\notag\\
&=\frac{1}{4n}\sum_{i=1}^n\Bigg\{\sum_{l,m=1}^d\sum_{l',m'=1}^d\partial_{\beta_j}\Xi_{i-1}^{l,m}\partial_{\beta_k}\Xi_{i-1}^{l',m'}(\Sigma_{i-1}^{l,m}\Sigma_{i-1}^{l',m'}+\Sigma_{i-1}^{m,m'}\Sigma_{i-1}^{l,l'}+\Sigma_{i-1}^{m,l'}\Sigma_{i-1}^{m',l})+R(\theta,\Delta_n^3,X_{i-1})\notag\\
&\quad+\frac{\partial_{\beta_k}\det(\Sigma_{i-1})}{\det(\Sigma_{i-1})}\sum_{l,m=1}^d\partial_{\beta_j}\Xi_{i-1}^{l,m}\Sigma_{i-1}^{l,m}+\frac{\partial_{\beta_j}\det(\Sigma_{i-1})}{\det(\Sigma_{i-1})}\sum_{l',m'=1}^d\partial_{\beta_k}\Xi_{i-1}^{l',m'}\Sigma_{i-1}^{l',m'}+R(\theta_0,\Delta_n^2,X_{i-1})\notag\\
&\quad+\frac{\partial_{\beta_j}\det(\Sigma_{i-1})}{\det(\Sigma_{i-1})}\frac{\partial_{\beta_k}\det(\Sigma_{i-1})}{\det(\Sigma_{i-1})}\Bigg\}\\
&=\frac{1}{2n}\sum_{i=1}^n\sum_{l,m=1}^d\sum_{l',m'=1}^d\partial_{\beta_j}\Sigma_{i-1}^{l,m}\partial_{\beta_k}\Sigma_{i-1}^{l',m'}\Xi_{i-1}^{l,m}\Xi_{i-1}^{l',m'}+\frac{1}{4n}\sum_{i=1}^n(R(\theta_0,\Delta_n^3,X_{i-1})+R(\theta_0,\Delta_n^2,X_{i-1}))\stackrel{P_{\theta_0}}{\to}\mathcal I_\sigma^{j,k}(\theta_0)
\end{align*}
where in the last step we have used the following relationship
$$\frac{\partial_{\beta_k}\det(\Sigma_{i-1})}{\det(\Sigma_{i-1})}=-\text{tr}[\partial_{\beta_k}\Xi_{i-1}\Sigma_{i-1}]=\text{tr}[\partial_{\beta_k}\Sigma_{i-1}\Xi_{i-1}]$$
and, once again, Lemma \ref{lem0}-\ref{lemmaer}.

In order to prove that
\begin{align*}
&\frac{1}{n^2}\sum_{i=1}^nE_{\theta_0}\{ (\partial_{\beta_j}\mathbb{H}_i(\theta_0)\partial_{\beta_k}\mathbb{H}_i(\theta_0))^2|\mathcal{G}_{i-1}^n\}\stackrel{P_{\theta_0}}{\to}0
\end{align*}
we use the same arguments in \eqref{eq:condrift2} and further note that 
\begin{align}
E_{\theta_0}\left\{\prod_{j=1}^4(\overline{X}_i^{k_j})^2|\mathcal{G}_{i-1}^n\right\}\leq E_{\theta_0}\left\{C(|X_i-X_{i-1}|^8+|\Delta_nb_{i-1}(\alpha_0)|^8)|\mathcal{G}_{i-1}^n\right\}=R(\theta_0,\Delta_n^4,X_{i-1})\
\end{align}
which is a consequence of Lemma \ref{lemmagr}.
By using the previous arguments, we get that
\begin{align*}
&\frac{1}{n\sqrt{\Delta_n}}\sum_{i=1}^nE_{\theta_0}\{ \partial_{\alpha_j}\mathbb{H}_i(\theta_0)\partial_{\beta_k}\mathbb{H}_i(\theta_0)|\mathcal{G}_{i-1}^n\}\\
&=\frac{1}{2n\sqrt{\Delta_n}}\sum_{i=1}^n\left.E_{\theta_0}\left\{\left(\sum_{l,m=1}^d\partial_{\alpha_j}b_{i-1}^{l}\Xi_{i-1}^{l,m}\overline{X}_i^{m}\right)\left(\sum_{l',m'=1}^d\frac{\partial_{\beta_k}\Xi_{i-1}^{l',m'}}{\Delta_n}\overline{X}_i^{m'}\overline{X}_i^{l'}+\frac{\partial_{\beta_k}\det(\Sigma_{i-1})}{\det(\Sigma_{i-1})}\right)\right|\mathcal{G}_{i-1}^n\right\}\\
&=\frac{1}{2n\sqrt{\Delta_n}}\sum_{i=1}^n\Bigg\{\sum_{l,m,l',m'=1}^d\frac{\partial_{\alpha_j}b_{i-1}^{l}\Xi_{i-1}^{l,m}}{\Delta_n}\left.E_{\theta_0}\left\{\overline{X}_i^{m}\overline{X}_i^{m'}\overline{X}_i^{l'}\right|\mathcal{G}_{i-1}^n\right\}\\
&\quad+\frac{\partial_{\beta_k}\det(\Sigma_{i-1})}{\det(\Sigma_{i-1})}\sum_{l,m=1}^d\partial_{\alpha_j}b_{i-1}^{l}\Xi_{i-1}^{l,m}\left.E_{\theta_0}\left\{\overline{X}_i^{m}\right|\mathcal{G}_{i-1}^n\right\}\Bigg\}\\
&=\frac{1}{2n}\sum_{i=1}^n[R(\theta_0,\sqrt{\Delta_n},X_{i-1})+R(\theta_0,\Delta_n^{3/2},X_{i-1})]\stackrel{P_{\theta_0}}{\to}0
\end{align*}
and
\begin{align*}
&\frac{1}{n^2\Delta_n}\sum_{i=1}^nE_{\theta_0}\{ (\partial_{\alpha_j}\mathbb{H}_i(\theta_0)\partial_{\beta_k}\mathbb{H}_i(\theta_0))^2|\mathcal{G}_{i-1}^n\}\stackrel{P_{\theta_0}}{\to}0
\end{align*}

This last step concludes the proof.

\end{proof}
\section{The family of test statistics}\label{sec:pseudo}

Let us remind that $\phi:[0,\infty)\to \mathbb{R}$ is convex and continuous. Furthermore, its restriction on $(0,\infty)$ is finite,  two times continuously differentiable.
The goal of this section is to construct a family of test statistics for the following hypotheses testing problem
$$H_0:\theta=\theta_0,\quad \text{vs} \quad H_1:\theta\neq \theta_0$$
concerning the stochastic differential equation \eqref{eq:sde}.
To this aim, let us consider the following quantity
\begin{equation}
\mathcal{D}_{\phi,n}(\theta,\theta_0):=\frac{1}{n}\sum_{i=1}^n\phi\left(\frac{p_i(\theta)}{p_i(\theta_0)}\right)
\end{equation}
where $p_i(\theta):=\exp(\mathbb{H}_i(\theta))$
 and $\mathbb{H}_i(\theta)$ is defined as in \eqref{qlik}. The statistic $\mathcal{D}_{\phi,n}(\theta,\theta_0)$ represents the empirical mean of the functions $\phi\left(\frac{p_i(\theta)}{p_i(\theta_0)}\right)$ which measure the discrepancy between two (approximated) parametric models given the sample ${\bf X}_n$.
 The statistics $\mathcal{D}_{\phi,n}(\theta,\theta_0)$ is not an approximation of the $\phi$-divergence \eqref{eq:div}
 because it does not converge to 
 $$\int\phi\left(\frac{\mu_\theta(x)}{\mu_{\theta_0}(x)}\right)\mu_{\theta_0}(x)\de x,$$
 but it proves to be useful in the construction of a new class of asymptotically distribution free  test statistics  as we will see in what follows.
The next result establishes  the convergence in probability of $\mathcal{D}_{\phi,n}(\theta,\theta_0)$.

 \begin{theorem}\label{conphi} 
 %Let $\phi:[0,+\infty) \to \mathbb R$ be a twice continuously differentiable function.
 Under the conditions $\mathcal A_1 - \mathcal A_7$, we have that
 \begin{equation}
 \mathcal{D}_{\phi,n}(\theta,\theta_0)\stackrel{P_{\theta_0}}{\to}U_\phi(\beta,\beta_0)
 \end{equation}
uniformly in $\theta$, where
\begin{align}
 U_\phi(\beta,\beta_0)&:=\int\Bigg\{\phi\left(\left(\frac{\det(\Sigma(\beta,x))}{\det(\Sigma(\beta_0,x))}\right)^{\frac12}\right)+\frac12\Bigg[\phi'\left(\left(\frac{\det(\Sigma(\beta,x))}{\det(\Sigma(\beta_0,x))}\right)^{\frac12}\right)\notag\\
 &\quad\times\left(\frac{\det(\Sigma(\beta,x))}{\det(\Sigma(\beta_0,x))}\right)^{\frac12}(\mathrm{tr}(\Xi(\beta,x)\Sigma(\beta_0,x))-d)\Bigg]\Bigg\}\mu_{\theta_0}(\de x)
 \end{align}
 
 \end{theorem}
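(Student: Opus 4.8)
The plan is to reproduce the architecture of the proof of Lemma \ref{lem1}: replace the empirical mean by the sum of its conditional expectations given $\mathcal{G}_{i-1}^n$ plus a martingale remainder that vanishes, and then identify the deterministic limit through the ergodic property $\mathcal{A}_3$. First I would write $p_i(\theta)/p_i(\theta_0)=\exp(\mathbb{H}_i(\theta)-\mathbb{H}_i(\theta_0))$ and split the exponent as
\begin{equation*}
\mathbb{H}_i(\theta)-\mathbb{H}_i(\theta_0)=\log r_{i-1}+\frac{1}{2\Delta_n}\left[\overline{X}_i'(\alpha)\Xi_{i-1}(\beta)\overline{X}_i(\alpha)-\overline{X}_i'(\alpha_0)\Xi_{i-1}(\beta_0)\overline{X}_i(\alpha_0)\right],
\end{equation*}
where $r_{i-1}:=(\det\Sigma_{i-1}(\beta)/\det\Sigma_{i-1}(\beta_0))^{1/2}=r(X_{t_{i-1}})$. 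The key preliminary observation is that the drift contributes only through negligible terms: since $\overline{X}_i(\alpha)-\overline{X}_i(\alpha_0)$ is of order $\Delta_n$ while $\overline{X}_i(\alpha_0)$ is of order $\sqrt{\Delta_n}$, the cross and pure-drift contributions to the bracket are $O(\sqrt{\Delta_n})$ and $O(\Delta_n)$ after division by $\Delta_n$, whence
\begin{equation*}
Q_i:=\frac{1}{2\Delta_n}\overline{X}_i'(\alpha_0)\left(\Xi_{i-1}(\beta)-\Xi_{i-1}(\beta_0)\right)\overline{X}_i(\alpha_0)
\end{equation*}
captures the whole $O(1)$ part. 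This is exactly why the limit $U_\phi$ depends on $(\beta,\beta_0)$ only.

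Next I would Taylor expand the smooth map $q\mapsto\phi(r_{i-1}e^{q})$ at $q=0$, giving
\begin{equation*}
\phi\!\left(\frac{p_i(\theta)}{p_i(\theta_0)}\right)=\phi(r_{i-1})+\phi'(r_{i-1})\,r_{i-1}\,Q_i+\text{(higher order in }Q_i\text{)}.
\end{equation*}
The conditional mean of the linear term is computed exactly as in \eqref{eq:condrift1}--\eqref{eq:condsigma}: using $E_{\theta_0}\{\overline{X}_i^{m}\overline{X}_i^{m'}\mid\mathcal{G}_{i-1}^n\}=\Delta_n\Sigma_{i-1}^{m,m'}+R(\theta_0,\Delta_n^2,X_{i-1})$ and $\mathrm{tr}(\Xi_{i-1}(\beta_0)\Sigma_{i-1}(\beta_0))=d$, one gets
\begin{equation*}
E_{\theta_0}\{Q_i\mid\mathcal{G}_{i-1}^n\}=\tfrac12\big(\mathrm{tr}(\Xi_{i-1}(\beta)\Sigma_{i-1}(\beta_0))-d\big)+R(\theta_0,\Delta_n,X_{i-1}).
\end{equation*}
Consequently $E_{\theta_0}\{\phi(p_i(\theta)/p_i(\theta_0))\mid\mathcal{G}_{i-1}^n\}$ equals $g_\beta(X_{t_{i-1}})+R(\theta_0,\Delta_n,X_{i-1})$ with $g_\beta(x)=\phi(r(x))+\tfrac12\phi'(r(x))r(x)(\mathrm{tr}(\Xi(\beta,x)\Sigma(\beta_0,x))-d)$, i.e. the integrand of $U_\phi$. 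Summing, dividing by $n$ and applying the triangular-array law of large numbers (Lemma \ref{lemmaGJ}, as in Lemma \ref{lem1}), the martingale remainder is $O_P(1/n)$ and hence negligible, while the ergodic theorem $\mathcal{A}_3$ gives $\frac1n\sum_{i=1}^n g_\beta(X_{t_{i-1}})\stackrel{P_{\theta_0}}{\to}\int g_\beta(x)\,\mu_{\theta_0}(\de x)=U_\phi(\beta,\beta_0)$; the polynomial bounds $\mathcal{A}_6$--$\mathcal{A}_7$ ensure $g_\beta$ is $\mu_{\theta_0}$-integrable. As a sanity check, at $\theta=\theta_0$ one has $r\equiv1$, $Q_i\equiv0$ and $\phi(1)=\phi'(1)=0$, so $U_\phi=0$, consistent with $\mathcal{D}_{\phi,n}(\theta_0,\theta_0)=0$.

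The main obstacle is the control of the higher-order terms in the expansion of $\phi$. Unlike the situation in Lemma \ref{lem1}, the normalized quadratic form $Q_i$ is $O_P(1)$ rather than $o_P(1)$, so the quadratic and higher terms in the Taylor expansion are not pathwise small; establishing that their averaged contribution is asymptotically negligible is the delicate point, and I would attack it by bounding the conditional moments $E_{\theta_0}\{Q_i^2\mid\mathcal{G}_{i-1}^n\}$ (Gaussian fourth-moment computations of the type used in \eqref{eq:condrift2}) together with the local boundedness of $\phi,\phi',\phi''$ on compacta guaranteed by the regularity of $\phi$. A secondary step is to upgrade the pointwise-in-$\theta$ convergence to the asserted uniform convergence on the compact set $\Theta$; this I would obtain from a stochastic-equicontinuity argument, controlling $\partial_\theta$ of the summands via $\mathcal{A}_6$--$\mathcal{A}_7$ and the smoothness of $\theta\mapsto\phi(r(\cdot))$.
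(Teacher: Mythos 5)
Your high-level architecture (conditional expectations, Lemma \ref{lemmaGJ}, the ergodic Lemma \ref{lemmaer}) coincides with the paper's, but the central analytic step is different and, as proposed, it does not close. The difficulty is exactly the one you flag yourself: after discarding the drift contributions, the exponent $Q_i=\frac{1}{2\Delta_n}\overline{X}_i'(\alpha_0)A_{i-1}\overline{X}_i(\alpha_0)$ with $A_{i-1}:=\Xi_{i-1}(\beta)-\Xi_{i-1}(\beta_0)$ is $O_P(1)$ and \emph{does not concentrate}, so the Taylor expansion of $q\mapsto\phi(r_{i-1}e^{q})$ cannot be truncated after the linear term. The remedy you propose, bounding $E_{\theta_0}\{Q_i^2\mid\mathcal{G}_{i-1}^n\}$, cannot succeed: Lemma \ref{lem0} gives
\begin{equation*}
E_{\theta_0}\{Q_i^2\mid\mathcal{G}_{i-1}^n\}=\tfrac14\Big[\big(\mathrm{tr}(A_{i-1}\Sigma_{i-1}(\beta_0))\big)^2+2\,\mathrm{tr}\big((A_{i-1}\Sigma_{i-1}(\beta_0))^2\big)\Big]+R(\theta_0,\Delta_n,X_{t_{i-1}}),
\end{equation*}
which is $O(1)$ and strictly positive as soon as $\Sigma_{i-1}(\beta)\neq\Sigma_{i-1}(\beta_0)$, so the quadratic (and each higher) term of the expansion contributes an $O_P(1)$, generically non-vanishing, amount to $\frac1n\sum_iE_{\theta_0}\{\cdot\mid\mathcal{G}_{i-1}^n\}$. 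A scalar example makes this concrete: for $d=1$ and constant diffusion coefficient $\sigma(\beta,x)=\beta$, the conditional expectation of a single summand is, up to $O(\sqrt{\Delta_n})$, equal to $E\,\phi\big(\tfrac{\beta}{\beta_0}\exp\{\tfrac{Z^2}{2}(\beta_0^2/\beta^2-1)\}\big)$ with $Z\sim N(0,1)$, and this coincides with the integrand $\phi(\beta/\beta_0)+\tfrac12\phi'(\beta/\beta_0)\tfrac{\beta}{\beta_0}(\beta_0^2/\beta^2-1)$ of $U_\phi$ only for $\phi=\log$ or in the limit $\beta\to\beta_0$. So the linear truncation is not merely unjustified; for nonlinear $\phi$ and $\beta$ away from $\beta_0$ it yields a different answer from the honest limit of your own decomposition.

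The paper's proof takes a different route precisely to avoid expanding $\phi$ in the random variable $Q_i$: it regards $F(x)=\phi(p_i(\theta)/p_i(\theta_0))$ as a smooth function of the terminal point $x=X_{t_i}$, computes $\partial_{x_k}F$ and $\partial^2_{x_kx_{k'}}F$ explicitly, and applies the It\^o--Taylor expansion \eqref{eq:itotay}. Every term of that expansion is then evaluated at $x=X_{t_{i-1}}$, where $\overline{x}(\alpha)=-\Delta_n b_{i-1}(\alpha)$, so the exponent inside $\phi$ is of order $\Delta_n$ (deterministic given $\mathcal{G}_{i-1}^n$), and the only $O(1)$ contribution of $\frac{\Delta_n}{2}\sum_{k,k'}\Sigma_{i-1}^{k,k'}(\beta_0)\partial^2_{x_kx_{k'}}F(X_{t_{i-1}})$ comes from the $\phi'$-term carrying the factor $\frac{1}{\Delta_n}(\Xi_{i-1}^{k,k'}(\beta)-\Xi_{i-1}^{k,k'}(\beta_0))$; that is how the second summand of $U_\phi$ appears with no $\phi''$ present. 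If you want to repair your argument you must either switch to that generator-based expansion (and then justify that the remainder $R(\theta,\Delta_n^2,X_{t_{i-1}})$ of \eqref{eq:itotay} is still valid for a function $F$ whose higher $x$-derivatives scale like negative powers of $\Delta_n$ --- which is your obstacle reappearing in another guise), or retain the full series in $Q_i$ and compute the limit of every term, which will not reproduce $U_\phi$ in general.
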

 
\begin{proof}
Let us define
\begin{align*}
F(X_i)&=\phi\left(\frac{p_i(\theta)}{p_i(\theta_0)}\right)\\
&=\phi\left(\left(\frac{\det(\Sigma_{i-1}(\beta))}{\det(\Sigma_{i-1}(\beta_0))}\right)^{\frac12}
\exp\left\{\frac{
1}{2\Delta_n}\left(\overline{X}_i'(\alpha)\Xi_{i-1}(\beta)\overline{X}_i(\alpha)-\overline{X}_i'(\alpha_0)\Xi_{i-1}(\beta_0)\overline{X}_i(\alpha_0)\right)\right\}\right)\\
&=\phi\left(\left(\frac{\det(\Sigma_{i-1}(\beta))}{\det(\Sigma_{i-1}(\beta_0))}\right)^{\frac12}
\exp\left\{\frac{
1}{2\Delta_n}\sum_{l,m=1}^d\left(\overline{X}_i^l(\alpha)\Xi_{i-1}^{l,m}(\beta)\overline{X}_i^m(\alpha)-\overline{X}_i^l(\alpha_0)\Xi_{i-1}^{l.m}(\beta_0)\overline{X}_i^m(\alpha_0)\right)\right\}\right)
\end{align*}
By setting $\overline{x}(\alpha)=(x-X_{i-1}-\Delta_nb_{i-1}(\alpha))$, we observe that
\begin{align*}
&\partial_{x_k}F(x)\\
&=\phi'\left(\left(\frac{\det(\Sigma_{i-1}(\beta))}{\det(\Sigma_{i-1}(\beta_0))}\right)^{\frac12}
\exp{\left\{\frac{
1}{2\Delta_n}\sum_{l,m=1}^d\left(\overline{x}^l(\alpha)\Xi_{i-1}^{l,m}(\beta)\overline{x}^m(\alpha)-\overline{x}^l(\alpha_0)\Xi_{i-1}^{l,m}(\beta_0)\overline{x}^m(\alpha_0)\right)\right\}}\right)\\
&\quad\times \left(\frac{\det(\Sigma_{i-1}(\beta))}{\det(\Sigma_{i-1}(\beta_0))}\right)^{\frac12}
\exp\left\{\frac{
1}{2\Delta_n}\sum_{l,m=1}^d\left(\overline{x}^l(\alpha)\Xi_{i-1}^{l,m}(\beta)\overline{x}^m(\alpha)-\overline{x}^l(\alpha_0)\Xi_{i-1}^{l,m}(\beta_0)\overline{x}^m(\alpha_0)\right)\right\}\\
&\quad \times\frac{1}{\Delta_n}\sum_{m=1}^d\left(\Xi_{i-1}^{k,m}(\beta)\overline{x}^m(\alpha)-\Xi_{i-1}^{k,m}(\beta_0)\overline{x}^m(\alpha_0)\right)
\end{align*}
and
\begin{align*}
&\partial_{x_kx_{k'}}F(x)\\
&=\phi''\left(\left(\frac{\det(\Sigma_{i-1}(\beta))}{\det(\Sigma_{i-1}(\beta_0))}\right)^{\frac12}
\exp{\left\{\frac{
1}{2\Delta_n}\sum_{l,m=1}^d\left(\overline{x}^l(\alpha)\Xi_{i-1}^{l,m}(\beta)\overline{x}^m(\alpha)-\overline{x}^l(\alpha_0)\Xi_{i-1}^{l,m}(\beta_0)\overline{x}^m(\alpha_0)\right)\right\}}\right)\\
&\quad\times \left(\frac{\det(\Sigma_{i-1}(\beta))}{\det(\Sigma_{i-1}(\beta_0))}\right)
\exp\left\{\frac{
1}{\Delta_n}\sum_{l,m=1}^d\left(\overline{x}^l(\alpha)\Xi_{i-1}^{l,m}(\beta)\overline{x}^m(\alpha)-\overline{x}^l(\alpha_0)\Xi_{i-1}^{l,m}(\beta_0)\overline{x}^m(\alpha_0)\right)\right\}\\
&\quad \times\left[\frac{1}{\Delta_n}\sum_{m=1}^d\left(\Xi_{i-1}^{k,m}(\beta)\overline{x}^m(\alpha)-\Xi_{i-1}^{k,m}(\beta_0)\overline{x}^m(\alpha_0)\right)\right]^2\\
&\quad+\phi'\left(\left(\frac{\det(\Sigma_{i-1}(\beta))}{\det(\Sigma_{i-1}(\beta_0))}\right)^{\frac12}
\exp{\left\{\frac{
1}{2\Delta_n}\sum_{l,m=1}^d\left(\overline{x}^l(\alpha)\Xi_{i-1}^{l,m}(\beta)\overline{x}^m(\alpha)-\overline{x}^l(\alpha_0)\Xi_{i-1}^{l,m}(\beta_0)\overline{x}^m(\alpha_0)\right)\right\}}\right)\\
&\quad\times \left(\frac{\det(\Sigma_{i-1}(\beta))}{\det(\Sigma_{i-1}(\beta_0))}\right)^{\frac12}
\exp\left\{\frac{
1}{2\Delta_n}\sum_{l,m=1}^d\left(\overline{x}^l(\alpha)\Xi_{i-1}^{l,m}(\beta)\overline{x}^m(\alpha)-\overline{x}^l(\alpha_0)\Xi_{i-1}^{l,m}(\beta_0)\overline{x}^m(\alpha_0)\right)\right\}\\
&\quad \times\left[\frac{1}{\Delta_n}\sum_{m=1}^d\left(\Xi_{i-1}^{k,m}(\beta)\overline{x}^m(\alpha)-\Xi_{i-1}^{k,m}(\beta_0)\overline{x}^m(\alpha_0)\right)\right]^2\\
&\quad+\phi'\left(\left(\frac{\det(\Sigma_{i-1}(\beta))}{\det(\Sigma_{i-1}(\beta_0))}\right)^{\frac12}
\exp{\left\{\frac{
1}{2\Delta_n}\sum_{l,m=1}^d\left(\overline{x}^l(\alpha)\Xi_{i-1}^{l,m}(\beta)\overline{x}^m(\alpha)-\overline{x}^l(\alpha_0)\Xi_{i-1}^{l,m}(\beta_0)\overline{x}^m(\alpha_0)\right)\right\}}\right)\\
&\quad\times \left(\frac{\det(\Sigma_{i-1}(\beta))}{\det(\Sigma_{i-1}(\beta_0))}\right)^{\frac12}
\exp\left\{\frac{
1}{2\Delta_n}\sum_{l,m=1}^d\left(\overline{x}^l(\alpha)\Xi_{i-1}^{l,m}(\beta)\overline{x}^m(\alpha)-\overline{x}^l(\alpha_0)\Xi_{i-1}^{l,m}(\beta_0)\overline{x}^m(\alpha_0)\right)\right\}\\
&\quad\times \frac{1}{\Delta_n}(\Xi_{i-1}^{k,k'}(\beta)-\Xi_{i-1}^{k,k'}(\beta_0))
\end{align*}

By taking into account \eqref{eq:itotay}, one has that
{\small\begin{align*}
&E_{\theta_0}\left\{F(X_i)|\mathcal{G}_{i-1}^n\right\}\\
&=F(X_{i-1})+\Delta_n\sum_{k=1}^db_{i-1}^k(\alpha_0)\partial_{x_k}F(X_{i-1})+\frac{\Delta_n}{2}\sum_{k,k'=1}^d\Sigma_{i-1}^{k,k'}(\beta_0)\partial_{x_kx_{k'}}F(X_{i-1})+R(\theta,\Delta_n^2,X_{i-1})\\
&=\phi\left(\left(\frac{\det(\Sigma_{i-1}(\beta))}{\det(\Sigma_{i-1}(\beta_0))}\right)^{\frac12}
\exp\left\{\frac{
\Delta_n}{2}\sum_{l,m=1}^d\left(b_{i-1}^l(\alpha)\Xi_{i-1}^{l,m}(\beta)b_{i-1}^m(\alpha)-b_{i-1}^l(\alpha_0)\Xi_{i-1}^{l,m}(\beta_0)b_{i-1}^m(\alpha_0)\right)\right\}\right)\\
&\quad-\Delta_n\sum_{k=1}^db_{i-1}^k(\alpha_0)\Bigg\{\phi'\left(\left(\frac{\det(\Sigma_{i-1}(\beta))}{\det(\Sigma_{i-1}(\beta_0))}\right)^{\frac12}
\exp\left\{\frac{
\Delta_n}{2}\sum_{l,m=1}^d\left(b_{i-1}^l(\alpha)\Xi_{i-1}^{l,m}(\beta)b_{i-1}^m(\alpha)-b_{i-1}^l(\alpha_0)\Xi_{i-1}^{l,m}(\beta_0)b_{i-1}^m(\alpha_0)\right)\right\}\right)\\
&\quad\times \left(\frac{\det(\Sigma_{i-1}(\beta))}{\det(\Sigma_{i-1}(\beta_0))}\right)^{\frac12}
\exp\left\{\frac{
\Delta_n}{2}\sum_{l,m=1}^d\left(b_{i-1}^l(\alpha)\Xi_{i-1}^{l,m}(\beta)b_{i-1}^m(\alpha)-b_{i-1}^l(\alpha_0)\Xi_{i-1}^{l,m}(\beta_0)b_{i-1}^m(\alpha_0)\right)\right\}\\
&\quad \times\sum_{m=1}^d\left(\Xi_{i-1}^{k,m}(\beta)b_{i-1}^m(\alpha)-\Xi_{i-1}^{k,m}(\beta_0)b_{i-1}^m(\alpha_0)\right)\Bigg\}+\frac{\Delta_n}{2}\sum_{k,k'=1}^d\Sigma_{i-1}^{k,k'}(\beta_0)\\
&\quad\times\Bigg\{\phi''\left(\left(\frac{\det(\Sigma_{i-1}(\beta))}{\det(\Sigma_{i-1}(\beta_0))}\right)^{\frac12}
\exp\left\{\frac{
\Delta_n}{2}\sum_{l,m=1}^d\left(b_{i-1}^l(\alpha)\Xi_{i-1}^{l,m}(\beta)b_{i-1}^m(\alpha)-b_{i-1}^l(\alpha_0)\Xi_{i-1}^{l,m}(\beta_0)b_{i-1}^m(\alpha_0)\right)\right\}\right)\\
&\quad\times \left(\frac{\det(\Sigma_{i-1}(\beta))}{\det(\Sigma_{i-1}(\beta_0))}\right)
\exp\left\{
\Delta_n\sum_{l,m=1}^d\left(b_{i-1}^l(\alpha)\Xi_{i-1}^{l,m}(\beta)b_{i-1}^m(\alpha)-b_{i-1}^l(\alpha_0)\Xi_{i-1}^{l,m}(\beta_0)b_{i-1}^m(\alpha_0)\right)\right\}\\
&\quad \times\left[\sum_{m=1}^d\left(\Xi_{i-1}^{k,m}(\beta)b_{i-1}^m(\alpha)-\Xi_{i-1}^{k,m}(\beta_0)b_{i-1}^m(\alpha_0)\right)\right]^2\\
&\quad+\phi'\left(\left(\frac{\det(\Sigma_{i-1}(\beta))}{\det(\Sigma_{i-1}(\beta_0))}\right)^{\frac12}
\exp\left\{\frac{
\Delta_n}{2}\sum_{l,m=1}^d\left(b_{i-1}^l(\alpha)\Xi_{i-1}^{l,m}(\beta)b_{i-1}^m(\alpha)-b_{i-1}^l(\alpha_0)\Xi_{i-1}^{l,m}(\beta_0)b_{i-1}^m(\alpha_0)\right)\right\}\right)\\
&\quad\times \left(\frac{\det(\Sigma_{i-1}(\beta))}{\det(\Sigma_{i-1}(\beta_0))}\right)^{\frac12}
\exp\left\{\frac{
\Delta_n}{2}\sum_{l,m=1}^d\left(b_{i-1}^l(\alpha)\Xi_{i-1}^{l,m}(\beta)b_{i-1}^m(\alpha)-b_{i-1}^l(\alpha_0)\Xi_{i-1}^{l,m}(\beta_0)b_{i-1}^m(\alpha_0)\right)\right\}\\
&\quad \times\left[\sum_{m=1}^d\left(\Xi_{i-1}^{k,m}(\beta)b_{i-1}^m(\alpha)-\Xi_{i-1}^{k,m}(\beta_0)b_{i-1}^m(\alpha_0)\right)\right]^2\\
&\quad+\phi'\left(\left(\frac{\det(\Sigma_{i-1}(\beta))}{\det(\Sigma_{i-1}(\beta_0))}\right)^{\frac12}
\exp\left\{\frac{
\Delta_n}{2}\sum_{l,m=1}^d\left(b_{i-1}^l(\alpha)\Xi_{i-1}^{l,m}(\beta)b_{i-1}^m(\alpha)-b_{i-1}^l(\alpha_0)\Xi_{i-1}^{l,m}(\beta_0)b_{i-1}^m(\alpha_0)\right)\right\}\right)\\
&\quad\times \left(\frac{\det(\Sigma_{i-1}(\beta))}{\det(\Sigma_{i-1}(\beta_0))}\right)^{\frac12}
\exp\left\{\frac{
\Delta_n}{2}\sum_{l,m=1}^d\left(b_{i-1}^l(\alpha)\Xi_{i-1}^{l,m}(\beta)b_{i-1}^m(\alpha)-b_{i-1}^l(\alpha_0)\Xi_{i-1}^{l,m}(\beta_0)b_{i-1}^m(\alpha_0)\right)\right\}\\
&\quad\times \frac{1}{\Delta_n}(\Xi_{i-1}^{k,k'}(\beta)-\Xi_{i-1}^{k,k'}(\beta_0))\Bigg\}+R(\theta,\Delta_n^2,X_{i-1})
\end{align*}}

Therefore, the following result holds

{\small\begin{align}\label{eq:concon1}
&\frac1n\sum_{i=1}^nE_{\theta_0}\left\{F(X_i)|\mathcal{G}_{i-1}^n\right\}\notag\\
&=\frac1n\sum_{i=1}^n\Bigg\{\phi\left(\left(\frac{\det(\Sigma_{i-1}(\beta))}{\det(\Sigma_{i-1}(\beta_0))}\right)^{\frac12}
\exp\left\{\frac{
\Delta_n}{2}\sum_{l,m=1}^d\left(b_{i-1}^l(\alpha)\Xi_{i-1}^{l,m}(\beta)b_{i-1}^m(\alpha)-b_{i-1}^l(\alpha_0)\Xi_{i-1}^{l,m}(\beta_0)b_{i-1}^m(\alpha_0)\right)\right\}\right)\notag\\
&\quad+\phi'\left(\left(\frac{\det(\Sigma_{i-1}(\beta))}{\det(\Sigma_{i-1}(\beta_0))}\right)^{\frac12}
\exp\left\{\frac{
\Delta_n}{2}\sum_{l,m=1}^d\left(b_{i-1}^l(\alpha)\Xi_{i-1}^{l,m}(\beta)b_{i-1}^m(\alpha)-b_{i-1}^l(\alpha_0)\Xi_{i-1}^{l,m}(\beta_0)b_{i-1}^m(\alpha_0)\right)\right\}\right)\notag\\
&\quad\times \left(\frac{\det(\Sigma_{i-1}(\beta))}{\det(\Sigma_{i-1}(\beta_0))}\right)^{\frac12}
\exp\left\{\frac{
\Delta_n}{2}\sum_{l,m=1}^d\left(b_{i-1}^l(\alpha)\Xi_{i-1}^{l,m}(\beta)b_{i-1}^m(\alpha)-b_{i-1}^l(\alpha_0)\Xi_{i-1}^{l,m}(\beta_0)b_{i-1}^m(\alpha_0)\right)\right\}\notag\\
&\quad\times \frac{1}{2}\sum_{k,k'=1}^d\Sigma_{i-1}^{k,k'}(\beta_0)(\Xi_{i-1}^{k,k'}(\beta)-\Xi_{i-1}^{k,k'}(\beta_0))\Bigg\}+\frac1n\sum_{i=1}^nR(\theta,\Delta_n,X_{i-1})+\frac1n\sum_{i=1}^nR(\theta,\Delta_n^2,X_{i-1})\notag\\
&\stackrel{P_{\theta_0}}{\to}U_\phi(\beta,\beta_0)
\end{align}}
uniformly in $\theta$, where in the last step we have used Lemma \ref{lemmaer}.

Now, by means of the same arguments it is not hard to prove that
\begin{equation}\label{eq:concon2}
\frac{1}{n^2}\sum_{i=1}^nE_{\theta_0}\left\{F(X_i)^2|\mathcal{G}_{i-1}^n\right\}\stackrel{P_{\theta_0}}{\to}0
\end{equation}

In conclusion Lemma \ref{lemmaGJ}  and the results \eqref{eq:concon1}, \eqref{eq:concon2} implies the statement of the present Theorem.

\end{proof}

We point out that for $\phi(x)=\log(x)$, from Theorem \ref{conphi} we derive that
 $$\mathcal{D}_{\log,n}(\theta,\theta_0)\stackrel{P_{\theta_0}}{\to}\frac12\int\left[\mathrm{tr}(\Xi(\beta,x)\Sigma(\beta_0,x))-d+\log\left(\frac{\det(\Sigma(\beta,x))}{\det(\Sigma(\beta_0,x))}\right)\right]\mu_{\theta_0}(\de x)$$
which coincides with the result (4.1) in Kessler (1997).

Remark further that  $\mathcal{D}_{\phi,n}(\theta,\theta_0)$ can be used as a contrast function to derive minimum contrast estimators $\tilde \theta_n$ which solve $\mathcal{D}_{\phi,n}(\tilde \theta_n,\theta_0)=0$ whose properties can be studied using Theorem \ref{conphi}.

Hereafter we assume, as before,  that  $\phi:[0,\infty)\to \mathbb{R}$ is convex and continuous, its restriction on $(0,\infty)$ is finite, it is two times continuously differentiable and, in addition, that  $\phi(1)=\phi'(1)=0$ and $\phi''(1)=1$. 

Now introduce the family of test statistics defined as follows
$$
T_{\phi,n}(\theta,\theta_0):=2n\mathcal{D}_{\phi,n}(\theta,\theta_0).
$$
The above quantity is similar to that used by Morales {\it e al.} (1997), where $\mathcal{D}_{\phi,n}(\theta,\theta_0)$ is replaced by the true $\phi$-divergence.

Let $\hat\theta_n$ be the quasi maximum likelihood of the Bayes-type estimator defined in Section \ref{sec:preliminary}. The first step is to prove that the family of test statistics 
\begin{equation}\label{eq:test}T_{\phi,n}(\hat\theta_n,\theta_0)\end{equation}
is asymptotically distribution free under $H_0$.

\begin{theorem}\label{main}
Under $H_0$ and the conditions $\mathcal A1-\mathcal A7$, as $n\to \infty$,  we have
that
\begin{equation}\label{main2}
T_{\phi,n}(\hat\theta_n,\theta_0)\stackrel{d}{\to} \chi_{p+q}^2.
\end{equation}
\end{theorem}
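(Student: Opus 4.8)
The plan is to read off the limiting law from a second order Taylor expansion of $\mathcal{D}_{\phi,n}(\cdot,\theta_0)$ about the true value $\theta_0$, evaluated at the consistent estimator $\hat\theta_n$, and then to recognise the resulting quadratic form as a Gaussian quadratic form with chi squared law. Write $g_i(\theta)=p_i(\theta)/p_i(\theta_0)=\exp(\mathbb{H}_i(\theta)-\mathbb{H}_i(\theta_0))$, so that $g_i(\theta_0)=1$, $\partial_{\theta_j}g_i(\theta_0)=\partial_{\theta_j}\mathbb{H}_i(\theta_0)$ and $\partial^2_{\theta_j\theta_k}g_i(\theta_0)=\partial_{\theta_j}\mathbb{H}_i(\theta_0)\partial_{\theta_k}\mathbb{H}_i(\theta_0)+\partial^2_{\theta_j\theta_k}\mathbb{H}_i(\theta_0)$. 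Since $\phi(1)=\phi'(1)=0$ and $\phi''(1)=1$, differentiating $\phi(g_i(\theta))$ gives $\phi(g_i(\theta_0))=0$, $\partial_{\theta_j}\phi(g_i(\theta))|_{\theta_0}=\phi'(1)\,\partial_{\theta_j}\mathbb{H}_i(\theta_0)=0$, and, crucially, $\partial^2_{\theta_j\theta_k}\phi(g_i(\theta))|_{\theta_0}=\phi''(1)\,\partial_{\theta_j}\mathbb{H}_i(\theta_0)\partial_{\theta_k}\mathbb{H}_i(\theta_0)+\phi'(1)\,\partial^2_{\theta_j\theta_k}\mathbb{H}_i(\theta_0)=\partial_{\theta_j}\mathbb{H}_i(\theta_0)\partial_{\theta_k}\mathbb{H}_i(\theta_0)$, the second derivative of $\mathbb{H}_i$ dropping out because $\phi'(1)=0$. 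Summing over $i$ and dividing by $n$ yields $\mathcal{D}_{\phi,n}(\theta_0,\theta_0)=0$, $\nabla_\theta\mathcal{D}_{\phi,n}(\theta_0,\theta_0)=0$ and $\nabla^2_\theta\mathcal{D}_{\phi,n}(\theta_0,\theta_0)=\tfrac1n\Lambda_n(\theta_0)$, with $\Lambda_n$ exactly as in Lemma \ref{lem1}.

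Next I would invoke the second order Taylor formula with Lagrange remainder along the segment joining $\theta_0$ and $\hat\theta_n$ (licit because $\mathcal{D}_{\phi,n}(\cdot,\theta_0)\in C^2$): there is a random $\bar\theta_n$ on that segment with
\[
T_{\phi,n}(\hat\theta_n,\theta_0)=2n\mathcal{D}_{\phi,n}(\hat\theta_n,\theta_0)=n(\hat\theta_n-\theta_0)'\nabla^2_\theta\mathcal{D}_{\phi,n}(\bar\theta_n,\theta_0)(\hat\theta_n-\theta_0),
\]
the constant and linear terms having vanished. Inserting the normaliser $\varphi(n)$ and setting $Z_n=\varphi(n)^{-1/2}(\hat\theta_n-\theta_0)$ and $M_n(\theta)=\varphi(n)^{1/2}\,n\,\nabla^2_\theta\mathcal{D}_{\phi,n}(\theta,\theta_0)\,\varphi(n)^{1/2}$, this is $T_{\phi,n}(\hat\theta_n,\theta_0)=Z_n'M_n(\bar\theta_n)Z_n$. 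At the true value $M_n(\theta_0)=\varphi(n)^{1/2}\Lambda_n(\theta_0)\varphi(n)^{1/2}\stackrel{P_{\theta_0}}{\to}\mathcal{I}(\theta_0)$ by \eqref{eq:conqlik}.

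The hard part will be to transfer this from $\theta_0$ to the random intermediate point, i.e.\ to prove $M_n(\bar\theta_n)\stackrel{P_{\theta_0}}{\to}\mathcal{I}(\theta_0)$. Consistency of $\hat\theta_n$ forces $\bar\theta_n\stackrel{P_{\theta_0}}{\to}\theta_0$, so it suffices to establish a uniform-in-neighbourhood limit $\sup_{|\theta-\theta_0|\le\delta_n}|M_n(\theta)-\mathcal{I}(\theta_0)|\stackrel{P_{\theta_0}}{\to}0$ for some $\delta_n\downarrow0$, with continuous limit at $\theta_0$. I would carry this out exactly along the lines of the proof of Theorem \ref{conphi}: expand the entries of $\nabla^2_\theta\mathcal{D}_{\phi,n}(\theta,\theta_0)$ through the conditional moments of $\overline{X}_i$ by the It\^o--Taylor expansion, absorb the surplus terms into the $R(\theta,\Delta_n^\ell,\cdot)$ remainders, and apply the ergodic theorem $\mathcal A_3$ uniformly in $\theta$, the polynomial bounds $\mathcal A_5$--$\mathcal A_7$ controlling the derivatives of $\phi$ and of the coefficients. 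The delicate accounting is that the $\alpha$- and $\beta$-blocks converge at the two different rates encoded in $\varphi(n)$, so the powers of $\Delta_n$ must be tracked blockwise precisely as in \eqref{eq:condrift}--\eqref{eq:condcross}; in particular the off-diagonal $(\alpha,\beta)$-block carries an extra factor $\sqrt{\Delta_n}$ and vanishes in the limit, reproducing the block-diagonal structure of $\mathcal{I}(\theta_0)$.

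Granting this, an extended Slutsky argument finishes the proof: since $Z_n\stackrel{d}{\to}Z\sim N(0,\mathcal{I}(\theta_0)^{-1})$ by \eqref{eq:conest} and $M_n(\bar\theta_n)\stackrel{P_{\theta_0}}{\to}\mathcal{I}(\theta_0)$ (a constant), the pair converges jointly and the continuous map $(z,m)\mapsto z'mz$ gives
\[
T_{\phi,n}(\hat\theta_n,\theta_0)=Z_n'M_n(\bar\theta_n)Z_n\stackrel{d}{\to}Z'\mathcal{I}(\theta_0)Z.
\]
Finally, putting $W=\mathcal{I}(\theta_0)^{1/2}Z$ one has $W\sim N(0,{\bf I}_{p+q})$ and $Z'\mathcal{I}(\theta_0)Z=W'W=\sum_{k=1}^{p+q}W_k^2\sim\chi^2_{p+q}$, which is \eqref{main2}. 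The whole argument is insensitive to the particular admissible $\phi$, the only features used being $\phi(1)=\phi'(1)=0$ and $\phi''(1)=1$, which is the structural reason the family is asymptotically distribution free.
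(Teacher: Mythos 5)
Your proposal is correct and follows essentially the same route as the paper: a second-order Taylor expansion of $\mathcal{D}_{\phi,n}(\cdot,\theta_0)$ at $\theta_0$, the vanishing of the constant and linear terms via $\phi(1)=\phi'(1)=0$, the identification $\nabla^2_\theta\mathcal{D}_{\phi,n}(\theta_0,\theta_0)=\tfrac1n\Lambda_n(\theta_0)$ (with the $\phi'(1)=0$ cancellation of $\partial^2\mathbb{H}_i$), Lemma \ref{lem1}, the asymptotic normality \eqref{eq:conest}, and Slutsky. The only difference is cosmetic but to your credit: you use the Lagrange form of the remainder and explicitly flag the need to transfer $M_n(\theta_0)\to\mathcal{I}(\theta_0)$ to the intermediate point, whereas the paper simply writes the remainder as $o_P(|\hat\theta_n-\theta_0|^2)$ and absorbs it into $o_P(1)$ without comment.
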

\begin{proof}%[Proof of Theorem \ref{main}]

By Taylor's formula, we have that
\begin{align*}
&n\mathcal{D}_{\phi,n}(\hat\theta_n,\theta_0)\\
&=n\mathcal{D}_{\phi,n}(\theta_0,\theta_0)+n\partial_\theta\mathcal{D}_{\phi,n}(\theta_0,\theta_0)(\hat\theta_n-\theta_0)+\frac12(\hat\theta_n-\theta_0)'n\partial^2_\theta\mathcal{D}_{\phi,n}(\theta_0,\theta_0)(\hat\theta_n-\theta_0)+o_P(|\hat\theta_n-\theta_0|^2)\\
&=\frac12((\hat\theta_n-\theta_0)\varphi(n)^{-1/2})'\varphi(n)^{1/2}n\partial^2_\theta\mathcal{D}_{\phi,n}(\theta_0,\theta_0)\varphi(n)^{1/2}\varphi(n)^{-1/2}(\hat\theta_n-\theta_0)+o_P(1)
\end{align*}
where in the last step we have used the fact that $\phi(1)=\phi'(1)=0$.  We note that the $(j,k)$-element of the Hessian matrix $\partial^2_\theta\mathcal{D}_{\phi,n}(\theta,\theta_0)$ is given by
\begin{align*}
\partial^2_\theta\mathcal{D}^{j,k}_{\phi,n}(\theta,\theta_0)=\frac1n\sum_{i=1}^n\left\{\phi''\left(\frac{p_i(\theta)}{p_i(\theta_0)}\right)\frac{1}{p_i^2(\theta_0)}\partial_{\theta_j}p_i(\theta)\partial_{\theta_k}p_i(\theta)+\phi'\left(\frac{p_i(\theta)}{p_i(\theta_0)}\right)\frac{1}{p_i(\theta_0)}\partial^2_{\theta_j\theta_k} p_i(\theta)\right\}
\end{align*}
and then 
$$\partial^2_\theta\mathcal{D}^{j,k}_{\phi,n}(\theta_0,\theta_0)=\frac{1}{n}\sum_{i=1}^n\partial_{\theta_j}\mathbb{H}_i(\theta_0)\partial_{\theta_k}\mathbb{H}_i(\theta_0)=\frac1n\Lambda^{j,k}_n(\theta_0)$$

By taking into account Lemma \ref{lem1} and the convergence result  \eqref{eq:conest}, the statement of the Theorem follows immediately.

\end{proof}

Given the level $\alpha$, such test rejects $H_0$ if $T_{\phi,n}>c_{\alpha}$ where $c_{\alpha}$ is the $1-\alpha$ quantile of the limiting random variable $\chi_{p+q}^2$.
The power function of the proposed test  is equal to
$$\beta_\phi^n(\theta)= P_\theta\left\{T_{\phi,n}(\hat\theta_n,\theta_0)>c_{\alpha}\right\},\qquad \theta\neq\theta_0.
$$
 This power function can be studied under the contiguous alternative setup. Indeed, in this case
 we are able to approximate $\beta_\phi^n(\theta)$ by means of a distribution function of a non-central chi square random variable.
 \begin{theorem}\label{main3}
Under the conditions $\mathcal A1-\mathcal A7$, $H_0 : \theta=\theta_0$ and the alternative contiguous hypotheses $H_1:\theta=\theta_0+\varphi(n)h$, where $h\in \{h\in \mathbb{R}^{p+q}:\theta=\theta_0+\varphi(n)h \in \Theta\}$,
we have that
\begin{equation}\label{main3}
\beta_\phi^n(\theta)\cong
1-\mathbf{F}_{p+q}\left(c_{\alpha}\right),
\end{equation}
where $\mathbf{F}_{p+q}(\cdot)$ is the cumulative function
of the random variable $\chi^2_{p+q}(\mu)$ which is a non-central chi square random variable with $p+q$ degrees of freedom and noncentrality parameter $\mu=h'\mathcal{I}(\theta_0)h$.
\end{theorem}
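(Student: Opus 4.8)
The plan is to re-run the second order expansion from the proof of Theorem~\ref{main}, but now evaluated along the sequence of local alternatives, and to identify the resulting limit as a quadratic form in a Gaussian vector whose mean is no longer zero. I would describe the local alternative through its normalized shift, writing $\theta=\theta_0+\varphi(n)^{1/2}h$ so that $\varphi(n)^{-1/2}(\theta-\theta_0)=h$, and then decompose
$$\varphi(n)^{-1/2}(\hat\theta_n-\theta_0)=\varphi(n)^{-1/2}(\hat\theta_n-\theta)+h.$$
Exactly as in Theorem~\ref{main}, Taylor's formula together with $\phi(1)=\phi'(1)=0$ and the identity $\partial^2_\theta\mathcal D^{j,k}_{\phi,n}(\theta_0,\theta_0)=\tfrac1n\Lambda_n^{j,k}(\theta_0)$ gives
$$T_{\phi,n}(\hat\theta_n,\theta_0)=\big(\varphi(n)^{-1/2}(\hat\theta_n-\theta_0)\big)'\,\varphi(n)^{1/2}\Lambda_n(\theta_0)\varphi(n)^{1/2}\,\big(\varphi(n)^{-1/2}(\hat\theta_n-\theta_0)\big)+o_P(1),$$
and the $o_P(1)$ remainder is controlled under the alternative by the same estimates as under the null.

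Next I would transfer the two convergence inputs used under $H_0$ to the local measures $P_\theta$. The structural fact needed is that, along the chosen shrinking neighborhood, the families $\{P_{\theta_0}\}$ and $\{P_\theta\}$ are mutually contiguous; this is a consequence of the local asymptotic normality of the quasi-likelihood $\mathbb H_n$ at rate $\varphi(n)^{-1/2}$, the same property that underlies \eqref{eq:conest} and Lemma~\ref{lem1}. Contiguity upgrades the $P_{\theta_0}$-convergence $\varphi(n)^{1/2}\Lambda_n(\theta_0)\varphi(n)^{1/2}\stackrel{P_{\theta_0}}{\to}\mathcal I(\theta_0)$ of Lemma~\ref{lem1} to the same convergence in $P_\theta$-probability. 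For the estimator, $\theta$ is the true value under $P_\theta$, so the asymptotic normality behind \eqref{eq:conest}, applied at the true value $\theta$ and using continuity of $\theta\mapsto\mathcal I(\theta)$, yields $\varphi(n)^{-1/2}(\hat\theta_n-\theta)\stackrel{d}{\to}N(0,\mathcal I(\theta_0)^{-1})$ under $P_\theta$; combined with the decomposition above this produces $\varphi(n)^{-1/2}(\hat\theta_n-\theta_0)\stackrel{d}{\to}N(h,\mathcal I(\theta_0)^{-1})$.

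I would then pass to the limit in the quadratic form by Slutsky's theorem and the continuous mapping theorem. Writing $Z\sim N(h,\mathcal I(\theta_0)^{-1})$ we obtain $T_{\phi,n}(\hat\theta_n,\theta_0)\stackrel{d}{\to}Z'\mathcal I(\theta_0)Z$. Setting $Y=\mathcal I(\theta_0)^{1/2}Z\sim N(\mathcal I(\theta_0)^{1/2}h,\mathbf I_{p+q})$ shows this quadratic form equals $|Y|^2$, a non-central chi square with $p+q$ degrees of freedom and noncentrality parameter $\big(\mathcal I(\theta_0)^{1/2}h\big)'\mathcal I(\theta_0)^{1/2}h=h'\mathcal I(\theta_0)h=\mu$. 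Since $c_\alpha$ is a continuity point of $\mathbf F_{p+q}$, the Portmanteau theorem gives $\beta_\phi^n(\theta)=P_\theta\{T_{\phi,n}(\hat\theta_n,\theta_0)>c_\alpha\}\to 1-\mathbf F_{p+q}(c_\alpha)$, which is the approximation~\eqref{main3}.

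The step I expect to be the main obstacle is the contiguity argument: one must establish the LAN expansion of the quasi-log-likelihood in the rapidly increasing design $n\Delta_n\to\infty$, $\Delta_n\to0$, $n\Delta_n^2\to0$, and verify that the asymptotic normality of $\hat\theta_n$ persists uniformly over the shrinking neighborhood $\theta=\theta_0+\varphi(n)^{1/2}h$, so that the limiting covariance is exactly $\mathcal I(\theta_0)^{-1}$. Once contiguity and the local asymptotic normality of $\hat\theta_n$ are in hand, the remainder is simply the null computation of Theorem~\ref{main} re-read at a non-zero limiting mean.
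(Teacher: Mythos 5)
Your proposal follows essentially the same route as the paper: the decomposition $\varphi(n)^{-1/2}(\hat\theta_n-\theta_0)=\varphi(n)^{-1/2}(\hat\theta_n-\theta)+h$, the Taylor expansion from Theorem \ref{main} with the Hessian identified as $\tfrac1n\Lambda_n(\theta_0)$, and the passage to the non-central $\chi^2_{p+q}(h'\mathcal I(\theta_0)h)$ limit via the continuous mapping theorem. If anything you are more careful than the paper, which silently transfers the $P_{\theta_0}$-convergence of Lemma \ref{lem1} and the $o_P(1)$ remainder to the local measures $P_\theta$ without invoking contiguity, and you also correctly read the alternative as $\theta=\theta_0+\varphi(n)^{1/2}h$ (the rate actually used in the paper's own proof, despite the $\varphi(n)h$ in the theorem statement).
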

\begin{proof}%[Proof of Theorem \ref{main3}]
Under $H_1:\theta=\theta_0+\varphi(n)^{\frac12}h$ we have that 
\begin{eqnarray}\label{pow1}
\varphi(n)^{-\frac12}(\hat\theta_n-\theta_0)=\varphi(n)^{-\frac12}(\hat\theta_n-\theta)+h
\stackrel{d}{\to}N(h,\mathcal{I}(\theta_0)^{-1})
\end{eqnarray}
where in the last step we have taken into account the convergence in \eqref{eq:conest} and that $\sup_n|\mathcal{I}(\theta)-\mathcal{I}(\theta_0)|=o_P(1)$ which is implied by the continuity of $\mathcal{I}(\theta)$ in $\theta_0$. Therefore, by means of the same arguments exploited in the proof of Theorem \ref{main}, we can write that
\begin{align*}
T_{\phi,n}(\hat\theta_n,\theta_0)&=((\hat\theta_n-\theta_0)\varphi(n)^{-1/2})'\varphi(n)^{1/2}n\partial^2_\theta\mathcal{D}_{\phi,n}(\theta_0,\theta_0)\varphi(n)^{1/2}\varphi(n)^{-1/2}(\hat\theta_n-\theta_0)+o_P(1)\\
&\stackrel{d}{\to}\chi^2_{p+q}(\mu)
\end{align*}
This last step concludes the proof.

\end{proof}

It is well known that  the likelihood ratio test is the uniformly most powerful test for all sample sizes for testing $\theta_0$ against any simple alternative $\theta_1$. 
In the framework of this paper, the natural benchmark test statistics is the generalised quasi-likelihood ratio test (GQLRT), that is
$$S_{n}(\hat\theta_n,\theta_0)=2n\mathcal{D}_{\log,n}(\hat\theta_n,\theta_0)=2[\mathbb{H}_n({\bf X}_n,\hat\theta_n)-\mathbb{H}_n({\bf X}_n,\theta_0)]$$
 (i.e. when $\theta_0$ is estimated using the maximum likelihood estimator).
We observe that $S_{n}(\hat\theta_n,\theta_0)$ is not a member of the family of tests \eqref{eq:test}, because it could be obtained from \eqref{eq:test} for $\phi(x) = \log(x)$ which does not satisfy the requirement of this Section. Nevertheless, the limiting distribution of GQLRTs is the same of $T_{\phi,n}(\hat\theta_n,\theta_0)$ as it is easy to verify, namely
$$S_{n}(\hat\theta_n,\theta_0)\stackrel{d}{\to}\chi^2_{p+q}$$
This implies, as expected, that all test statistics $T_{\phi,n}(\hat\theta_n,\theta_0)$ are asymptotically equivalent to the $S_{n}(\hat\theta_n,\theta_0)$. Next section investigates the behaviour for small sample sizes.

\section{Numerical analysis}\label{sec:numerics}

Although all  test statistics $T_{\phi,n}$ of Section \ref{sec:pseudo} satisfy  the
same asymptotic results of Theorems \ref{main2} and \ref{main3}, for small sample sizes the performance of the test
is determined by the statistical model generating the data and the sample size.
In our numerical study we will consider the power of the test under local alternatives as in Theorem \ref{main3} for different $\phi$ functions, sample sizes of $n=50, 100, 250, 500, 1000$ observations and $T=n^\frac13$, in order to satisfy the asymptotic theory. For testing $\theta_0$ against the local alternatives $\theta_0 + \frac{h}{\sqrt{n\Delta_n}}$ for the parameters in the drift coefficient and $\theta_0 + \frac{h}{\sqrt{n}}$ for the parameters in the diffusion coefficient, $h$ is taken in a grid from $0$ to $1$, and $h=0$ corresponds to the null hypothesis $H_0$.
We consider the following $\phi$ functions, which are all such that $\phi(1)=\phi'(1)=0$ and $\phi''(1)=1$.
\begin{itemize}
\item $\phi(x) = 1-x+x \log(x)$: the true $\phi$-divergence  based on this function is equivalent to the Kullback-Leibler divergence, but in our setup this is not true. We use the label $AKL$ in the table for this approximate KL;
\item $\phi(x) = \frac{x^{\lambda+1} -x - \lambda(x-1)}{\lambda(\lambda+1)}, \lambda\neq -1,0$: this corresponds to the power divergence studied in Cressie and Read (1984). We use $\lambda=k$ in the tables, with $k=-30, -10, -3$;
\item $\phi(x) = \left(\frac{x-1}{x+1}\right)^2$: this was proposed in the Balakrishnan and  Sanghvi (1968), we name it BS in the tables.
\end{itemize}
For the data generating process, we consider the following statistical models
\begin{itemize}
\item[OU:]  the one-dimensional Ornstein-Uhlenbeck model solution to  $\de X_t = (\alpha-\beta X_t)\de t + \sigma \de W_t$, $X_0=1$, with $\theta_0=(\alpha, \beta, \sigma) = (0.5, 0.5, 0.25)$;
\item[GBM:]  the one-dimensional geometric Brownian motion model solution to  $\de X_t = (\alpha-\beta X_t)\de t + \sigma X_t \de W_t$, $X_0=1$, with $\theta_0=(\alpha, \beta, \sigma) = (0.5, 0.5, 0.25)$;
\item[CIR:]  the one-dimensional CIR model solution to  $\de X_t = (\alpha-\beta X_t)\de t + \sigma \sqrt{X_t} \de W_t$, $X_0=1$, with $\theta_0=(\alpha, \beta, \sigma) = (0.5, 0.5, 0.125)$;
\item[MOU:]  the two-dimensional Ornstein-Uhlenbeck model  solution to  $\de X_t^{(1)} = (2-\mu_1X_t^{(1)})\de t + \sigma_1 \de W_t^{(1)}$, $\de X_t^{(2)} = (2-\mu_2X_t^{(2)})\de t + \sigma_2 \de W_t^{(2)}$,
 $X_0=(1,1)$, with $\theta_0=(\mu_1,\mu_2,\sigma_1, \sigma_2) = (1, 1, 0.3, 0.5)$;
\end{itemize}
In all experiments, the process have been simulated at high frequency using the Euler-Maruyama scheme and resampled to obtain $n=50, 100, 250, 500, 1000$ observations.
The $T_{\phi,n}$ statistics is constructed using the quasi maximum likelihood estimator.
Each experiment is replicated 1000 times and the Tables \ref{tab:OU}, \ref{tab:GBM}, \ref{tab:CIR}, and \ref{tab:MOU} contain the empirical power function, i.e. the average values   
$$
\beta_h = \frac{\#\{T_{n,\phi}(\hat\theta_n, \theta_0+h/\varphi_n) > \tilde c_{\alpha}\}}{1000}
$$
and the empirical level of the test
$$
\alpha = \frac{\#\{T_{n,\phi}(\hat\theta_n, \theta_0) > \tilde c_{\alpha}\}}{1000}
$$
where $\tilde c_\alpha$ is the estimated $\alpha$ quantile of the empirical distribution of $T_{\phi,n}$, with $\alpha=0.05$.
The choice of using the empirical threshold $\tilde c_\alpha$ instead of the theoretical threshold $c_\alpha$ from the $\chi^2_d$ distribution, is due to the fact that otherwise the test are non comparable, i.e the level of test are not $\alpha$ and, for example, when $h=0$ the test for different $\phi$'s produces different empirical level of the test.

In the Tables \ref{tab:OU}, \ref{tab:GBM}, \ref{tab:CIR}, and \ref{tab:MOU} we have used the bold face font to put in evidence the test statistics with the highest empirical power function $\beta_h$ for a given local alternative $h>0$.
As mentioned before,  the natural benchmark test statistics is the generalised quasi likelihood ratio test GQLRT.

From the numerical analysis we can see several facts
\begin{itemize} 
\item the test statistic based on the AKL $\phi$ function is not equivalent to the GQLRT as in the case of true $\phi$-divergence test statistics;
\item the GQLRT test statistics appears to be (almost) uniformly more powerful for small $h$ when the sample size is small ($n=50$) but, as soon as the sample size increases, other divergences test statistics $T_{\phi,n}$ have higher power;
\item  in all cases, the GQLRT is not most powerful test for all alternatives and most of the times, the power divergences are more powerful;
\item there seems to be no uniformly most powerful test in among the set of $\phi$-test statistics proposed here.
\end{itemize}

In conclusion, this paper proposed a new family of test statistics for discretely observed diffusion processes. Although closely related to the $\phi$-divergence test statistics, the test proposed here is different and new.
The empirical analysis shows that, despite the asymptotic equivalence of all test statistics proposed here, for small sample size and different statistical models, there is no uniformly most powerful test among the members of this family. Furthermore, the generalised quasi likelihood test
statistics, which does not belong to this class, is not necessarily the optimal test as well.

\section{Extension to other classes of diffusion processes}\label{ext}

The methodology developed in this paper is quite general and under suitable conditions can be applied for testing other ergodic diffusion models observed at discrete times. Therefore, in this Section we discuss, briefly, some possible extensions of the test statistics \eqref{eq:test} without delving into the technical aspects.  

 Let us start considering a generalization of \eqref{eq:sde} given by an ergodic multidimensional diffusion process with jumps, that is
\begin{equation}\label{eq:sdejumps}
\de X_t = b(\alpha, X_{t-}) \de t + \sigma(\beta,X_{t-})  \de W_t+\int_Ec(X_{t-},z,\alpha)p(dt,dz),\quad X_0=x_0,
\end{equation}
where $p(dt,dz)$ is a Poisson random measure on $\mathbb{R}^+\times E, E=\mathbb{R}^d-\{0\}$ and $q^\alpha(\de t,\de z)$ is its intensity measure, that is $E(p(\de t,\de z))=q^{\alpha}(\de t,\de z)$. We set $q^{\alpha}(\de t,\de z)=f_\alpha(z)\de z\de t$ and $f_\alpha(z)=\lambda(\alpha)F_{\alpha}(z)$ where $\lambda(\alpha)$ is a nonnegative function and $F_{\alpha}(z)$ represents a probability density. The parametric estimation of \eqref{eq:sdejumps} has been tackled by Shimizu and Yoshida (2006) and Ogihara and Yoshida (2011) with a similar but slightly different contrast function. Shimizu (2006) dealt with $M$-estimators for the same statistical problem. Now, we give some sketches on the contrast function used in the previous papers to which we refer for major details about the parametric estimate of the stochastic differential equation model \eqref{eq:sdejumps}.  In particular, Ogihara and Yoshida (2011) introduced the following quasi-likelihood function
\begin{align}\label{cfjumps}
&\mathbb{H}_n({\bf X}_n,\theta)\notag\\
&:=-\frac12\sum_{i=1}^n\left\{\log\det(\Sigma_{i-1}(\beta))
+\frac{
1}{\Delta_n}\overline{X}_i'(\alpha)\Xi_{i-1}(\beta)\overline{X}_i(\alpha)\right\}1_{\{|X_{t_i}-X_{t_{i-1}}|\leq D\Delta_n^\rho\}}\notag\\
&\quad +\sum_{i=1}^n\Big\{\log\left[\Psi_{\alpha}(X_{i-1},X_{t_i}-X_{t_{i-1}})\varphi_n(X_{i-1},X_{t_i}-X_{t_{i-1}})\right]\varphi_n(X_{i-1},\Delta X_i)1_{\{|X_{t_i}-X_{t_{i-1}}|\leq D\Delta_n^\rho\}}\notag\\
&\quad-{\Delta_n}\int \Psi_{\alpha}(X_{i-1},y)\varphi_n(X_{i-1},y)\de y\Big\}
\end{align}
where $D>0$, $\rho$ is a suitable constant, $\Psi(y,x)=f_\theta(c^{-1}(x,y,\theta))|J(x,y,\theta)|$ where $J(x,y,\theta)$ is the Jacobian of $c^{-1}(x,y,\theta)$ and $0\leq \varphi_n(x,y)\leq 1$ is a sequence of real valued functions satisfying condition [H10] in Ogihara and Yoshida (2011). 

The introduced contrast function is very natural since it is split in two parts: the first component is the contrast for an usual diffusion process, and the second one emerges from the discretization of the likelihood function of an compound Poisson process with Levy density $f_\alpha$ . Therefore, if $|X_{t_i}-X_{t_{i-1}}|\leq D\Delta_n^\rho$ the function $\mathbb{H}_n({\bf X}_n,\theta)$ judges no jumps occur in the interval $(t_{i-1},t_i]$ and it reduces to the quasi-likelihood function for a diffusion process without jumps; otherwise  $\mathbb{H}_n({\bf X}_n,\theta)$ judges a jumps occurs in the previous time interval. 

By taking into account a rapidly increasing scheme, under suitable assumptions, Ogihara and Yoshida (2011) proved that the quasi-maximum likelihood estimator and a Bayes type estimator based on \eqref{cfjumps} are consistent and asymptotically gaussian. Furthermore, it is possible to prove that Lemma \ref{lem1} holds. This implies that the test statistics $T_{\phi,n}$, opportunely modified for testing \eqref{eq:sdejumps}, weakly converges to a chi-squared random variable.

Another random model which attracted the attention of the researchers has been the small-diffusion process. In this case we consider the following stochastic differential equation
\begin{equation}\label{eq:sdesmall}
\de X_t = b(\alpha, X_t) \de t +\varepsilon \sigma(\beta,X_t)  \de W_t,\quad X_0=x_0,
\end{equation}
where $t\in[0,1]$ and $\varepsilon\in(0,1]$. S\o rensen and Uchida (2003) and Gloter and S\o rensen (2009), introduced the contrast function

\begin{align}\label{cfsmall}
&\mathbb{H}_{\varepsilon ,n}({\bf X}_n,\theta):=\sum_{i=1}^n\left\{\log\det(\Sigma_{i-1}(\beta))
+\varepsilon^{-2}n\overline{X}_i'(\alpha)\Xi_{i-1}(\beta)\overline{X}_i(\alpha)\right\}
\end{align}
where $\overline{X}_i(\alpha)=X_{t_i}-X_{t_{i-1}}-\frac1n b(\alpha,X_{t_i})$. In this framework $\varepsilon\to 0$ as $n\to\infty$. Under suitable conditions, it is possible to show that $\hat\theta_{\varepsilon ,n}=(\hat\alpha_{\varepsilon ,n},\hat\beta_{\varepsilon ,n})=\min_\theta\mathbb{H}_{\varepsilon ,n}({\bf X}_n,\theta)$ is consistent and 
$$\binom{\varepsilon^{-1}(\hat\alpha_{\varepsilon ,n}-\alpha_0)}{\sqrt{n}(\hat\beta_{\varepsilon ,n}-\beta_0)}\stackrel{d}{\to} N(0,\mathcal I(\theta_0)^{-1})$$
where $\mathcal I(\theta)$ is the Fisher information matrix for \eqref{eq:sdesmall}. Furthermore, Lemma \ref{lem1} admits a version based on the small diffusion process \eqref{eq:sdesmall}. Then the test statistics \eqref{eq:test} constructed by means of \eqref{cfsmall} have the asymptotic properties proved in Theorem \ref{main2}-\ref{main3}.  

%\section{Tables}

\begin{table}[ht]
\begin{center}
{\scriptsize 
\begin{tabular}{c c}
$n=50$ & $n=100$\\
\begin{tabular}{rrrrrrr}
  \hline
 & AKL & GQLRT & BS & $\lambda=-20$ & $\lambda=-10$ & $\lambda=-3$ \\ 
  \hline
h=0.00 & 0.050 & 0.050 & 0.050 & 0.050 & 0.050 & 0.050 \\ 
  h=0.01 & 0.047 & \bf 0.054 & 0.045 & 0.053 & 0.053 & 0.050 \\ 
  h=0.05 & 0.031 & \bf  0.069 & 0.044 & 0.064 & 0.067 & 0.048 \\ 
  h=0.10 & 0.019 &  \bf 0.095 & 0.047 & 0.085 & 0.088 & 0.065 \\ 
  h=0.20 & 0.013 &  \bf 0.172 & 0.072 & 0.152 & 0.159 & 0.103 \\ 
  h=0.30 & 0.018 &  \bf 0.291 & 0.144 & 0.247 & 0.258 & 0.184 \\ 
  h=0.40 & 0.028 &  \bf 0.443 & 0.246 & 0.347 & 0.362 & 0.310 \\ 
  h=0.50 & 0.056 &  \bf 0.616 & 0.398 & 0.441 & 0.462 & 0.437 \\ 
  h=0.60 & 0.115 & \bf  0.765 & 0.530 & 0.535 & 0.551 & 0.559 \\ 
  h=0.70 & 0.194 &  \bf 0.873 & 0.677 & 0.604 & 0.617 & 0.671 \\ 
  h=0.80 & 0.324 &  \bf 0.929 & 0.802 & 0.657 & 0.679 & 0.775 \\ 
  h=0.90 & 0.448 &  \bf 0.971 & 0.898 & 0.705 & 0.720 & 0.870 \\ 
  h=1.00 & 0.594 &  \bf 0.987 & 0.947 & 0.734 & 0.749 & 0.922 \\ 
   \hline
\end{tabular}
&
\begin{tabular}{rrrrrrr}
  \hline
 & AKL & GQLRT & BS & $\lambda=-20$ & $\lambda=-10$ & $\lambda=-3$ \\ 
  \hline
h=0.00 & 0.050 & 0.050 & 0.050 & 0.050 & 0.050 & 0.050 \\ 
  h=0.01 & 0.046 & 0.054 & 0.051 &\bf 0.058 &\bf  0.058 & 0.052 \\ 
  h=0.05 & 0.028 & 0.061 & 0.045 & 0.083 &\bf  0.086 & 0.054 \\ 
  h=0.10 & 0.022 & 0.087 & 0.047 & 0.134 &\bf  0.141 & 0.068 \\ 
  h=0.20 & 0.009 & 0.168 & 0.074 & 0.241 &\bf  0.253 & 0.144 \\ 
  h=0.30 & 0.020 & 0.281 & 0.164 & 0.359 &\bf  0.374 & 0.253 \\ 
  h=0.40 & 0.029 & 0.463 & 0.282 & 0.501 &\bf  0.525 & 0.404 \\ 
  h=0.50 & 0.072 & 0.633 & 0.443 & 0.624 &\bf  0.641 & 0.544 \\ 
  h=0.60 & 0.164 & \bf 0.774 & 0.613 & 0.725 & 0.753 & 0.698 \\ 
  h=0.70 & 0.272 & \bf 0.880 & 0.749 & 0.814 & 0.843 & 0.819 \\ 
  h=0.80 & 0.421 & \bf 0.943 & 0.856 & 0.877 & 0.907 & 0.899 \\ 
  h=0.90 & 0.594 & \bf 0.974 & 0.922 & 0.921 & 0.944 & 0.952 \\ 
  h=1.00 & 0.729 & \bf 0.989 & 0.966 & 0.951 & 0.968 & 0.984 \\ 
   \hline
\end{tabular}\\
\\
$n=250$ & $n=500$\\
\begin{tabular}{rrrrrrr}
  \hline
 & AKL & GQLRT & BS & $\lambda=-20$ & $\lambda=-10$ & $\lambda=-3$ \\ 
  \hline
h=0.00 & 0.050 & 0.050 & 0.050 & 0.050 & 0.050 & 0.050 \\ 
  h=0.01 & 0.044 & 0.054 & 0.047 & 0.053 &\bf  0.059 & 0.052 \\ 
  h=0.05 & 0.035 & 0.063 & 0.049 & 0.072 &\bf   0.080 & 0.063 \\ 
  h=0.10 & 0.025 & 0.091 & 0.054 & 0.107 &\bf   0.118 & 0.076 \\ 
  h=0.20 & 0.036 & 0.176 & 0.091 & 0.199 & \bf  0.221 & 0.152 \\ 
  h=0.30 & 0.061 & 0.301 & 0.200 & 0.323 & \bf  0.376 & 0.290 \\ 
  h=0.40 & 0.136 & 0.481 & 0.353 & 0.480 &\bf   0.539 & 0.471 \\ 
  h=0.50 & 0.276 & 0.678 & 0.537 & 0.643 &\bf   0.719 & 0.648 \\ 
  h=0.60 & 0.434 & 0.825 & 0.721 & 0.782 & \bf  0.837 & 0.813 \\ 
  h=0.70 & 0.632 & \bf  0.925 & 0.866 & 0.878 & 0.923 & 0.917 \\ 
  h=0.80 & 0.787 &\bf   0.977 & 0.945 & 0.931 & 0.972 & 0.966 \\ 
  h=0.90 & 0.906 & \bf  0.989 & 0.985 & 0.975 & 0.988 & 0.993 \\ 
  h=1.00 & 0.960 & 0.998 & 0.995 & 0.992 &\bf   0.999 & 0.998 \\ 
   \hline
\end{tabular}
&
\begin{tabular}{rrrrrrr}
  \hline
 & AKL & GQLRT & BS & $\lambda=-20$ & $\lambda=-10$ & $\lambda=-3$ \\ 
  \hline
h=0.00 & 0.050 & 0.050 & 0.050 & 0.050 & 0.050 & 0.050 \\ 
  h=0.01 & 0.048 & 0.050 & 0.048 &\bf  0.055 &\bf   0.055 & 0.054 \\ 
  h=0.05 & 0.035 & 0.058 & 0.046 & 0.075 &\bf   0.077 & 0.061 \\ 
  h=0.10 & 0.027 & 0.071 & 0.059 & 0.104 &\bf   0.117 & 0.075 \\ 
  h=0.20 & 0.051 & 0.158 & 0.118 & 0.222 & \bf  0.247 & 0.172 \\ 
  h=0.30 & 0.109 & 0.293 & 0.243 & 0.369 &\bf   0.405 & 0.314 \\ 
  h=0.40 & 0.235 & 0.473 & 0.421 & 0.560 & \bf  0.615 & 0.501 \\ 
  h=0.50 & 0.408 & 0.684 & 0.631 & 0.721 &\bf   0.778 & 0.698 \\ 
  h=0.60 & 0.615 & 0.828 & 0.794 & 0.854 &\bf   0.902 & 0.836 \\ 
  h=0.70 & 0.793 & 0.928 & 0.910 & 0.938 &\bf   0.958 & 0.938 \\ 
  h=0.80 & 0.903 & 0.970 & 0.965 & 0.968 & \bf  0.983 & 0.977 \\ 
  h=0.90 & 0.963 & 0.992 & 0.991 & 0.992 & \bf  0.998 & 0.993 \\ 
  h=1.00 & 0.990 & 0.999 & 0.998 & 0.999 & \bf  1.000 &\bf   1.000 \\ 
   \hline
\end{tabular}\\
\\
\end{tabular}
$n=1000$\\
\begin{tabular}{rrrrrrr}
  \hline
 & AKL & GQLRT & BS & $\lambda=-20$ & $\lambda=-10$ & $\lambda=-3$ \\ 
  \hline
h=0.00 & 0.050 & 0.050 & 0.050 & 0.050 & 0.050 & 0.050 \\ 
  h=0.01 & 0.047 & 0.049 & 0.051 & \bf 0.057 & 0.055 & 0.050 \\ 
  h=0.05 & 0.038 & 0.057 & 0.050 & \bf  0.089 & 0.076 & 0.054 \\ 
  h=0.10 & 0.047 & 0.078 & 0.065 & 0.120 & 0 \bf .122 & 0.071 \\ 
  h=0.20 & 0.076 & 0.138 & 0.108 &  \bf 0.243 & 0.232 & 0.134 \\ 
  h=0.30 & 0.143 & 0.263 & 0.230 &  \bf 0.430 & 0.429 & 0.259 \\ 
  h=0.40 & 0.288 & 0.444 & 0.411 & 0.630 &  \bf 0.640 & 0.454 \\ 
  h=0.50 & 0.494 & 0.635 & 0.615 & 0.798 &  \bf 0.800 & 0.668 \\ 
  h=0.60 & 0.692 & 0.805 & 0.795 & 0.908 & \bf  0.920 & 0.836 \\ 
  h=0.70 & 0.848 & 0.925 & 0.917 & 0.970 &  \bf 0.973 & 0.933 \\ 
  h=0.80 & 0.943 & 0.977 & 0.974 & 0.995 &  \bf 0.996 & 0.979 \\ 
  h=0.90 & 0.982 & 0.995 & 0.996 &  \bf 0.999 & \bf  0.999 & 0.997 \\ 
  h=1.00 & 0.997 & \bf  1.000 &  \bf 1.000 & \bf  1.000 & \bf  1.000 & \bf  1.000 \\ 
   \hline
\end{tabular}
}
\end{center}
\caption{Empirical power function $\beta_h$, for different sample sizes $n$ and local alternatives $h$. The empirical power and theoretical power is $\alpha=0.05$. Data generating model: the 1-dimensional Ornstein-Uhlenbeck process.}
\label{tab:OU}
\end{table}

\begin{table}[ht]
\begin{center}
{\scriptsize 
\begin{tabular}{c c}
$n=50$ & $n=100$\\
\begin{tabular}{rrrrrrr}
  \hline
 & AKL & GQLRT & BS & $\lambda=-20$ & $\lambda=-10$ & $\lambda=-3$ \\ 
  \hline
h=0.00 & 0.050 & 0.050 & 0.050 & 0.050 & 0.050 & 0.050 \\ 
  h=0.01 & 0.046 & \bf 0.057 & 0.048 & 0.053 & 0.053 & 0.052 \\ 
  h=0.05 & 0.033 &  \bf 0.073 & 0.040 & 0.067 & 0.065 & 0.054 \\ 
  h=0.10 & 0.017 &  \bf 0.098 & 0.048 & 0.092 & 0.092 & 0.065 \\ 
  h=0.20 & 0.007 & \bf  0.175 & 0.062 & 0.167 & 0.164 & 0.105 \\ 
  h=0.30 & 0.011 &  \bf 0.296 & 0.127 & 0.260 & 0.259 & 0.194 \\ 
  h=0.40 & 0.018 & \bf  0.444 & 0.237 & 0.364 & 0.368 & 0.312 \\ 
  h=0.50 & 0.042 & \bf  0.607 & 0.373 & 0.458 & 0.463 & 0.433 \\ 
  h=0.60 & 0.085 & \bf  0.752 & 0.502 & 0.543 & 0.545 & 0.544 \\ 
  h=0.70 & 0.154 &  \bf 0.867 & 0.657 & 0.604 & 0.616 & 0.667 \\ 
  h=0.80 & 0.241 &  \bf 0.928 & 0.774 & 0.663 & 0.671 & 0.775 \\ 
  h=0.90 & 0.385 &  \bf 0.966 & 0.867 & 0.719 & 0.727 & 0.850 \\ 
  h=1.00 & 0.508 & 0 \bf .986 & 0.935 & 0.754 & 0.761 & 0.915 \\ 
   \hline
\end{tabular}
&
\begin{tabular}{rrrrrrr}
  \hline
 & AKL & GQLRT & BS & $\lambda=-20$ & $\lambda=-10$ & $\lambda=-3$ \\ 
  \hline
h=0.00 & 0.050 & 0.050 & 0.050 & 0.050 & 0.050 & 0.050 \\ 
  h=0.01 & 0.046 & 0.056 & 0.046 & \bf 0.058 & 0.057 & 0.046 \\ 
  h=0.05 & 0.027 & 0.057 & 0.040 &\bf  0.081 & 0.079 & 0.057 \\ 
  h=0.10 & 0.019 & 0.084 & 0.046 & 0.120 & \bf 0.124 & 0.073 \\ 
  h=0.20 & 0.014 & 0.174 & 0.071 & 0.230 & \bf 0.232 & 0.142 \\ 
  h=0.30 & 0.019 & 0.289 & 0.152 & 0.345 & \bf 0.357 & 0.253 \\ 
  h=0.40 & 0.029 & 0.462 & 0.271 & 0.477 & \bf 0.493 & 0.401 \\ 
  h=0.50 & 0.071 &\bf  0.634 & 0.431 & 0.600 & 0.627 & 0.553 \\ 
  h=0.60 & 0.159 &\bf  0.780 & 0.583 & 0.716 & 0.740 & 0.700 \\ 
  h=0.70 & 0.273 &\bf  0.885 & 0.734 & 0.804 & 0.829 & 0.825 \\ 
  h=0.80 & 0.418 &\bf  0.940 & 0.842 & 0.866 & 0.891 & 0.901 \\ 
  h=0.90 & 0.585 & \bf 0.973 & 0.917 & 0.910 & 0.933 & 0.951 \\ 
  h=1.00 & 0.720 & \bf 0.988 & 0.960 & 0.944 & 0.959 & 0.980 \\ 
   \hline
\end{tabular}
\\
\\
$n=250$ & $n=500$\\
\begin{tabular}{rrrrrrr}
  \hline
 & AKL & GQLRT & BS & $\lambda=-20$ & $\lambda=-10$ & $\lambda=-3$ \\ 
  \hline
h=0.00 & 0.050 & 0.050 & 0.050 & 0.050 & 0.050 & 0.050 \\ 
  h=0.01 & 0.045 & 0.053 & 0.049 & \bf 0.058 & 0.055 & 0.051 \\ 
  h=0.05 & 0.036 & 0.063 & 0.044 & \bf  0.084 & 0.078 & 0.060 \\ 
  h=0.10 & 0.027 & 0.091 & 0.050 & \bf  0.109 & 0.108 & 0.074 \\ 
  h=0.20 & 0.025 & 0.170 & 0.094 & 0.205 &  \bf 0.208 & 0.148 \\ 
  h=0.30 & 0.051 & 0.301 & 0.193 & 0.334 &  \bf 0.350 & 0.285 \\ 
  h=0.40 & 0.118 & 0.483 & 0.340 & 0.498 &  \bf 0.527 & 0.460 \\ 
  h=0.50 & 0.244 & 0.674 & 0.530 & 0.659 &  \bf 0.693 & 0.643 \\ 
  h=0.60 & 0.405 &  \bf 0.825 & 0.717 & 0.790 & 0.814 & 0.808 \\ 
  h=0.70 & 0.598 &  \bf 0.924 & 0.866 & 0.879 & 0.910 & 0.907 \\ 
  h=0.80 & 0.765 &  \bf 0.973 & 0.941 & 0.937 & 0.962 & 0.966 \\ 
  h=0.90 & 0.885 & 0.989 & 0.979 & 0.975 & 0.988 &  \bf 0.991 \\ 
  h=1.00 & 0.954 & \bf  0.999 & 0.995 & 0.993 & 0.998 & 0.998 \\ 
   \hline
\end{tabular}
&
\begin{tabular}{rrrrrrr}
  \hline
 & AKL & GQLRT & BS & $\lambda=-20$ & $\lambda=-10$ & $\lambda=-3$ \\ 
  \hline
h=0.00 & 0.050 & 0.050 & 0.050 & 0.050 & 0.050 & 0.050 \\ 
  h=0.01 & 0.048 & 0.047 & 0.049 &\bf  0.056 & 0.055 & 0.046 \\ 
  h=0.05 & 0.039 & 0.055 & 0.053 & 0.076 & \bf  0.078 & 0.053 \\ 
  h=0.10 & 0.026 & 0.079 & 0.061 & 0.114 &\bf   0.119 & 0.068 \\ 
  h=0.20 & 0.053 & 0.160 & 0.127 & 0.229 & \bf  0.244 & 0.159 \\ 
  h=0.30 & 0.118 & 0.296 & 0.253 & 0.373 &\bf   0.421 & 0.300 \\ 
  h=0.40 & 0.243 & 0.481 & 0.446 & 0.576 & \bf  0.623 & 0.494 \\ 
  h=0.50 & 0.436 & 0.695 & 0.650 & 0.723 & \bf  0.770 & 0.689 \\ 
  h=0.60 & 0.636 & 0.831 & 0.800 & 0.857 & \bf  0.909 & 0.825 \\ 
  h=0.70 & 0.794 & 0.929 & 0.919 & 0.936 & \bf  0.963 & 0.933 \\ 
  h=0.80 & 0.908 & 0.969 & 0.969 & 0.970 & \bf  0.984 & 0.977 \\ 
  h=0.90 & 0.966 & 0.992 & 0.992 & 0.994 &\bf   0.998 & 0.992 \\ 
  h=1.00 & 0.990 & 0.998 & 0.998 & 0.999 &\bf   1.000 &\bf   1.000 \\ 
   \hline
\end{tabular}\\
\\
\end{tabular}
\\
$n=1000$\\
\begin{tabular}{rrrrrrr}
  \hline
 & AKL & GQLRT & BS & $\lambda=-20$ & $\lambda=-10$ & $\lambda=-3$ \\ 
  \hline
h=0.00 & 0.050 & 0.050 & 0.050 & 0.050 & 0.050 & 0.050 \\ 
  h=0.01 & 0.047 & 0.052 & 0.045 & \bf 0.063 & 0.054 & 0.047 \\ 
  h=0.05 & 0.033 & 0.056 & 0.043 & \bf 0.090 & 0.078 & 0.055 \\ 
  h=0.10 & 0.039 & 0.083 & 0.060 & \bf 0.125 &\bf  0.125 & 0.071 \\ 
  h=0.20 & 0.068 & 0.139 & 0.104 &\bf  0.248 & 0.246 & 0.134 \\ 
  h=0.30 & 0.139 & 0.259 & 0.215 & 0.437 & \bf 0.440 & 0.267 \\ 
  h=0.40 & 0.283 & 0.448 & 0.397 & 0.630 & \bf 0.642 & 0.456 \\ 
  h=0.50 & 0.474 & 0.647 & 0.595 & 0.807 & \bf 0.817 & 0.677 \\ 
  h=0.60 & 0.686 & 0.821 & 0.787 & 0.915 & \bf 0.924 & 0.840 \\ 
  h=0.70 & 0.845 & 0.928 & 0.909 & 0.978 & \bf 0.976 & 0.934 \\ 
  h=0.80 & 0.938 & 0.976 & 0.971 & 0.993 & \bf 0.997 & 0.982 \\ 
  h=0.90 & 0.981 & 0.995 & 0.997 & \bf 0.999 & \bf 0.999 & 0.997 \\ 
  h=1.00 & 0.997 &\bf  1.000 &\bf  1.000 &\bf  1.000 &\bf  1.000 &\bf  1.000 \\ 
   \hline
\end{tabular}
}
\end{center}
\caption{Empirical power function $\beta_h$, for different sample sizes $n$ and local alternatives $h$. The empirical power and theoretical power is $\alpha=0.05$. Data generating model: the 1-dimensional geometric Brownian Motion process.}
\label{tab:GBM}
\end{table}

\begin{table}[ht]
\begin{center}
{\scriptsize 
\begin{tabular}{c c}
$n=50$ & $n=100$\\
\begin{tabular}{rrrrrrr}
  \hline
 & AKL & GQLRT & BS & $\lambda=-20$ & $\lambda=-10$ & $\lambda=-3$ \\ 
  \hline
h=0.00 & 0.050 & 0.050 & 0.050 & 0.050 & 0.050 & 0.050 \\ 
  h=0.01 & 0.038 & 0.054 & 0.046 & 0.054 & \bf 0.056 & 0.050 \\ 
  h=0.05 & 0.014 &  \bf 0.093 & 0.055 & 0.083 & 0.088 & 0.062 \\ 
  h=0.10 & 0.009 &  \bf 0.167 & 0.076 & 0.145 & 0.152 & 0.095 \\ 
  h=0.20 & 0.014 &  \bf 0.434 & 0.252 & 0.327 & 0.342 & 0.291 \\ 
  h=0.30 & 0.072 &  \bf 0.747 & 0.528 & 0.504 & 0.524 & 0.521 \\ 
  h=0.40 & 0.213 &  \bf 0.927 & 0.799 & 0.634 & 0.650 & 0.746 \\ 
  h=0.50 & 0.471 &  \bf 0.986 & 0.944 & 0.723 & 0.737 & 0.902 \\ 
  h=0.60 & 0.720 &  \bf 0.998 & 0.989 & 0.759 & 0.771 & 0.969 \\ 
  h=0.70 & 0.902 &  \bf 1.000 & 0.998 & 0.783 & 0.808 & 0.995 \\ 
  h=0.80 & 0.971 &  \bf 1.000 & \bf  1.000 & 0.809 & 0.826 & 0.999 \\ 
  h=0.90 & 0.995 &  \bf 1.000 &  \bf 1.000 & 0.814 & 0.836 & 0.999 \\ 
  h=1.00 & 0.998 &  \bf 1.000 & \bf  1.000 & 0.814 & 0.836 &  \bf 1.000 \\ 
   \hline
\end{tabular}
&
\begin{tabular}{rrrrrrr}
  \hline
 & AKL & GQLRT & BS & $\lambda=-20$ & $\lambda=-10$ & $\lambda=-3$ \\ 
  \hline
h=0.00 & 0.050 & 0.050 & 0.050 & 0.050 & 0.050 & 0.050 \\ 
  h=0.01 & 0.040 & 0.053 & 0.049 & 0.063 &\bf  0.065 & 0.050 \\ 
  h=0.05 & 0.017 & 0.082 & 0.043 & 0.119 & \bf  0.133 & 0.062 \\ 
  h=0.10 & 0.007 & 0.166 & 0.065 & 0.235 & \bf  0.241 & 0.131 \\ 
  h=0.20 & 0.025 & 0.465 & 0.250 & 0.480 & \bf  0.505 & 0.386 \\ 
  h=0.30 & 0.130 & \bf  0.772 & 0.557 & 0.704 & 0.729 & 0.678 \\ 
  h=0.40 & 0.379 & \bf  0.941 & 0.833 & 0.857 & 0.889 & 0.893 \\ 
  h=0.50 & 0.683 &\bf   0.987 & 0.956 & 0.940 & 0.956 & 0.982 \\ 
  h=0.60 & 0.887 & \bf  1.000 & 0.991 & 0.966 & 0.980 & 0.996 \\ 
  h=0.70 & 0.977 & \bf  1.000 & 0.999 & 0.980 & 0.990 & 0.999 \\ 
  h=0.80 & 0.993 & \bf  1.000 & \bf  1.000 & 0.987 & 0.991 & \bf  1.000 \\ 
  h=0.90 & 1.000 & \bf  1.000 & \bf  1.000 & 0.990 & 0.994 & \bf  1.000 \\ 
  h=1.00 & 1.000 & \bf  1.000 & \bf  1.000 & 0.991 & 0.994 & \bf  1.000 \\ 
    \hline
\end{tabular}
\\
\\
$n=250$ & $n=500$\\
\begin{tabular}{rrrrrrr}
  \hline
 & AKL & GQLRT & BS & $\lambda=-20$ & $\lambda=-10$ & $\lambda=-3$ \\ 
  \hline
h=0.00 & 0.050 & 0.050 & 0.050 & 0.050 & 0.050 & 0.050 \\ 
  h=0.01 & 0.040 & 0.052 & 0.047 & 0.060 & \bf 0.063 & 0.051 \\ 
  h=0.05 & 0.022 & 0.081 & 0.054 & 0.110 & \bf  0.111 & 0.076 \\ 
  h=0.10 & 0.017 & 0.165 & 0.093 & 0.203 &  \bf 0.212 & 0.135 \\ 
  h=0.20 & 0.103 & 0.464 & 0.337 & 0.475 & \bf  0.514 & 0.438 \\ 
  h=0.30 & 0.371 & 0.812 & 0.714 & 0.771 &  \bf 0.817 & 0.793 \\ 
  h=0.40 & 0.739 &  \bf 0.971 & 0.937 & 0.932 & 0.963 & 0.962 \\ 
  h=0.50 & 0.941 &  \bf 0.999 & 0.995 & 0.991 &  \bf 0.999 & 0.997 \\ 
  h=0.60 & 0.996 &  \bf 1.000 & \bf  1.000 &  \bf 1.000 &  \bf 1.000 &  \bf 1.000 \\ 
  h=0.70 &  \bf 1.000 &  \bf 1.000 &  \bf 1.000 &  \bf 1.000 &  \bf 1.000 &  \bf 1.000 \\ 
  h=0.80 &  \bf 1.000 &  \bf 1.000 &  \bf 1.000 &  \bf 1.000 &  \bf 1.000 &  \bf 1.000 \\ 
  h=0.90 &  \bf 1.000 &  \bf 1.000 &  \bf 1.000 &  \bf 1.000 &  \bf 1.000 &  \bf 1.000 \\ 
  h=1.00 &  \bf 1.000 &  \bf 1.000 &  \bf 1.000 &  \bf 1.000 &  \bf 1.000 &  \bf 1.000 \\ 
    \hline
\end{tabular}
&
\begin{tabular}{rrrrrrr}
  \hline
 & AKL & GQLRT & BS & $\lambda=-20$ & $\lambda=-10$ & $\lambda=-3$ \\ 
  \hline
 h=0.00 & 0.050 & 0.050 & 0.050 & 0.050 & 0.050 & 0.050 \\ 
  h=0.01 & 0.043 & 0.046 & 0.051 &\bf  0.061 & 0.060 & 0.054 \\ 
  h=0.05 & 0.027 & 0.073 & 0.063 & 0.113 &\bf   0.122 & 0.072 \\ 
  h=0.10 & 0.043 & 0.151 & 0.117 & 0.228 & \bf  0.256 & 0.165 \\ 
  h=0.20 & 0.216 & 0.463 & 0.419 & 0.566 & \bf  0.612 & 0.477 \\ 
  h=0.30 & 0.600 & 0.819 & 0.788 & 0.856 & \bf  0.909 & 0.825 \\ 
  h=0.40 & 0.894 & 0.966 & 0.965 & 0.964 & \bf  0.982 & 0.975 \\ 
  h=0.50 & 0.988 & 0.995 & 0.995 & \bf  0.999 & \bf  0.999 &\bf   0.999 \\ 
  h=0.60 & 0.999 & \bf  1.000 &\bf   1.000 & \bf  1.000 &\bf   1.000 &\bf   1.000 \\ 
  h=0.70 & \bf  1.000 & \bf  1.000 & \bf  1.000 & \bf  1.000 & \bf  1.000 & \bf  1.000 \\ 
  h=0.80 & \bf  1.000 & \bf  1.000 & \bf  1.000 & \bf  1.000 & \bf  1.000 & \bf  1.000 \\ 
  h=0.90 & \bf  1.000 & \bf  1.000 & \bf  1.000 & \bf  1.000 & \bf  1.000 & \bf  1.000 \\ 
  h=1.00 & \bf  1.000 & \bf  1.000 & \bf  1.000 & \bf  1.000 & \bf  1.000 & \bf  1.000 \\ 
    \hline
\end{tabular}\\
\\
\end{tabular}\\
$n=1000$\\
\begin{tabular}{rrrrrrr}
  \hline
 & AKL & GQLRT & BS & $\lambda=-20$ & $\lambda=-10$ & $\lambda=-3$ \\ 
  \hline
h=0.00 & 0.050 & 0.050 & 0.050 & 0.050 & 0.050 & 0.050 \\ 
  h=0.01 & 0.042 & 0.050 & 0.047 & \bf 0.065 & 0.056 & 0.050 \\ 
  h=0.05 & 0.040 & 0.081 & 0.062 & 0.123 &\bf  0.125 & 0.070 \\ 
  h=0.10 & 0.065 & 0.146 & 0.103 & \bf 0.241 & 0.238 & 0.135 \\ 
  h=0.20 & 0.266 & 0.439 & 0.395 & 0.624 & \bf 0.638 & 0.445 \\ 
  h=0.30 & 0.666 & 0.805 & 0.776 & 0.906 & \bf 0.913 & 0.832 \\ 
  h=0.40 & 0.928 & 0.971 & 0.970 & 0.992 & \bf 0.994 & 0.974 \\ 
  h=0.50 & 0.995 & \bf 1.000 & \bf 1.000 & \bf 1.000 & \bf 1.000 & \bf 1.000 \\ 
  h=0.60 & \bf 1.000 & \bf 1.000 & \bf 1.000 & \bf 1.000 & \bf 1.000 & \bf 1.000 \\ 
  h=0.70 & \bf 1.000 & \bf 1.000 & \bf 1.000 & \bf 1.000 & \bf 1.000 & \bf 1.000 \\ 
  h=0.80 & \bf 1.000 & \bf 1.000 & \bf 1.000 & \bf 1.000 & \bf 1.000 & \bf 1.000 \\ 
  h=0.90 & \bf 1.000 & \bf 1.000 & \bf 1.000 & \bf 1.000 & \bf 1.000 & \bf 1.000 \\ 
  h=1.00 & \bf 1.000 & \bf 1.000 & \bf 1.000 & \bf 1.000 & \bf 1.000 & \bf 1.000 \\ 
   \hline
\end{tabular}
}
\end{center}
\caption{Empirical power function $\beta_h$, for different sample sizes $n$ and local alternatives $h$. The empirical power and theoretical power is $\alpha=0.05$. Data generating model: the 1-dimensional CIR process.}
\label{tab:CIR}
\end{table}

\begin{table}[ht]
\begin{center}
{\scriptsize 
\begin{tabular}{c c}
$n=50$ & $n=100$\\
\begin{tabular}{rrrrrrr}
  \hline
 & AKL & GQLRT & BS & $\lambda=-20$ & $\lambda=-10$ & $\lambda=-3$ \\ 
  \hline
h=0.00 & 0.050 & 0.050 & 0.050 & 0.050 & 0.050 & 0.050 \\ 
  h=0.01 & 0.049 & 0.052 & 0.050 & 0.053 & \bf 0.054 & 0.052 \\ 
  h=0.05 & 0.043 & \bf 0.061 & 0.053 & 0.059 &\bf  0.061 & 0.060 \\ 
  h=0.10 & 0.037 & \bf 0.084 & 0.063 & 0.071 & 0.074 & 0.072 \\ 
  h=0.20 & 0.031 & \bf 0.165 & 0.101 & 0.098 & 0.099 & 0.111 \\ 
  h=0.30 & 0.045 &\bf  0.310 & 0.194 & 0.146 & 0.147 & 0.176 \\ 
  h=0.40 & 0.077 & \bf 0.492 & 0.333 & 0.191 & 0.191 & 0.268 \\ 
  h=0.50 & 0.146 & \bf 0.695 & 0.493 & 0.241 & 0.243 & 0.388 \\ 
  h=0.60 & 0.274 & \bf 0.860 & 0.680 & 0.282 & 0.284 & 0.508 \\ 
  h=0.70 & 0.439 &\bf  0.940 & 0.806 & 0.331 & 0.336 & 0.635 \\ 
  h=0.80 & 0.620 &\bf  0.985 & 0.904 & 0.380 & 0.383 & 0.740 \\ 
  h=0.90 & 0.775 & \bf 0.998 & 0.962 & 0.418 & 0.421 & 0.828 \\ 
  h=1.00 & 0.890 & \bf 0.999 & 0.989 & 0.445 & 0.452 & 0.901 \\ 
   \hline
\end{tabular}
&
\begin{tabular}{rrrrrrr}
  \hline
 & AKL & GQLRT & BS & $\lambda=-20$ & $\lambda=-10$ & $\lambda=-3$ \\ 
  \hline
h=0.00 & 0.050 & 0.050 & 0.050 & 0.050 & 0.050 & 0.050 \\ 
  h=0.01 & 0.049 & 0.052 & 0.052 & 0.051 & 0.052 & \bf 0.054 \\ 
  h=0.05 & 0.041 & \bf  0.065 & 0.054 & 0.059 & 0.060 & 0.063 \\ 
  h=0.10 & 0.038 & \bf  0.091 & 0.063 & 0.078 & 0.076 & 0.068 \\ 
  h=0.20 & 0.039 & \bf  0.186 & 0.106 & 0.124 & 0.127 & 0.115 \\ 
  h=0.30 & 0.058 & \bf  0.354 & 0.213 & 0.185 & 0.190 & 0.220 \\ 
  h=0.40 & 0.106 & \bf  0.568 & 0.377 & 0.275 & 0.286 & 0.360 \\ 
  h=0.50 & 0.232 & \bf  0.774 & 0.573 & 0.371 & 0.379 & 0.528 \\ 
  h=0.60 & 0.399 & \bf  0.914 & 0.762 & 0.473 & 0.487 & 0.704 \\ 
  h=0.70 & 0.596 & \bf  0.972 & 0.894 & 0.566 & 0.581 & 0.831 \\ 
  h=0.80 & 0.803 & \bf  0.996 & 0.957 & 0.650 & 0.667 & 0.919 \\ 
  h=0.90 & 0.905 & \bf  0.998 & 0.985 & 0.713 & 0.732 & 0.964 \\ 
  h=1.00 & 0.963 &  \bf 1.000 & 0.996 & 0.758 & 0.775 & 0.987 \\ 
     \hline
\end{tabular}
\\
\\
$n=250$ & $n=500$\\
\begin{tabular}{rrrrrrr}
  \hline
 & AKL & GQLRT & BS & $\lambda=-20$ & $\lambda=-10$ & $\lambda=-3$ \\ 
  \hline
h=0.00 & 0.050 & 0.050 & 0.050 & 0.050 & 0.050 & 0.050 \\ 
  h=0.01 & 0.048 & 0.054 & 0.052 & 0.055 & \bf 0.059 & 0.054 \\ 
  h=0.05 & 0.042 & 0.065 & 0.057 & \bf 0.075 & 0.074 & 0.063 \\ 
  h=0.10 & 0.046 & 0.082 & 0.073 & 0.104 & \bf 0.105 & 0.077 \\ 
  h=0.20 & 0.074 & \bf 0.197 & 0.136 & 0.161 & 0.165 & 0.155 \\ 
  h=0.30 & 0.152 & \bf 0.384 & 0.283 & 0.255 & 0.274 & 0.312 \\ 
  h=0.40 & 0.294 & \bf 0.597 & 0.467 & 0.390 & 0.414 & 0.500 \\ 
  h=0.50 & 0.492 & \bf 0.800 & 0.681 & 0.540 & 0.584 & 0.706 \\ 
  h=0.60 & 0.707 & \bf 0.922 & 0.846 & 0.674 & 0.733 & 0.862 \\ 
  h=0.70 & 0.866 & \bf 0.976 & 0.942 & 0.811 & 0.862 & 0.950 \\ 
  h=0.80 & 0.949 & \bf 0.995 & 0.983 & 0.901 & 0.940 & 0.985 \\ 
  h=0.90 & 0.988 & \bf 1.000 & 0.995 & 0.954 & 0.975 & 0.997 \\ 
  h=1.00 & 0.995 & \bf 1.000 & 0.999 & 0.976 & 0.989 &\bf  1.000 \\ 
  \hline
\end{tabular}
&
\begin{tabular}{rrrrrrr}
  \hline
 & AKL & GQLRT & BS & $\lambda=-20$ & $\lambda=-10$ & $\lambda=-3$ \\ 
  \hline
  h=0.00 & 0.050 & 0.050 & 0.050 & 0.050 & 0.050 & 0.050 \\ 
  h=0.01 & 0.050 & 0.053 & 0.053 & 0.056 & 0.053 & \bf 0.057 \\ 
  h=0.05 & 0.050 & 0.065 & 0.067 & \bf  0.073 &  0.070 & 0.064 \\ 
  h=0.10 & 0.062 & 0.098 & 0.084 & 0.097 &  0.098 &  \bf 0.100 \\ 
  h=0.20 & 0.117 & 0.190 & 0.165 & 0.179 &  \bf 0.195 & 0.184 \\ 
  h=0.30 & 0.229 & 0.359 & 0.324 & 0.313 & \bf  0.361 & \bf  0.361 \\ 
  h=0.40 & 0.423 &  \bf 0.596 & 0.547 & 0.478 & 0.542 & 0.582 \\ 
  h=0.50 & 0.651 &  \bf 0.790 & 0.742 & 0.660 & 0.731 & 0.770 \\ 
  h=0.60 & 0.836 &  \bf 0.930 & 0.902 & 0.805 & 0.875 & 0.914 \\ 
  h=0.70 & 0.940 &  \bf 0.979 & 0.970 & 0.916 & 0.961 & 0.975 \\ 
  h=0.80 & 0.986 &  \bf 0.997 & 0.995 & 0.977 & 0.990 & 0.995 \\ 
  h=0.90 & 0.998 &  0.999 & 0.998 & 0.992 & \bf  1.000 & 0.999 \\ 
  h=1.00 & 0.999 &  \bf 1.000 &  \bf 1.000 &  \bf 1.000 & \bf  1.000 & \bf  1.000 \\ 
    \hline
\end{tabular}\\
\\
\end{tabular}
$n=1000$\\
\begin{tabular}{rrrrrrr}
  \hline
 & AKL & GQLRT & BS & $\lambda=-20$ & $\lambda=-10$ & $\lambda=-3$ \\ 
  \hline
  h=0.00 & 0.050 & 0.050 & 0.050 & 0.050 & 0.050 & 0.050 \\ 
  h=0.01 & 0.045 & 0.049 & 0.050 & \bf 0.056 & 0.053 & 0.052 \\ 
  h=0.05 & 0.039 & 0.052 & 0.057 &\bf  0.071 & 0.068 & 0.058 \\ 
  h=0.10 & 0.044 & 0.074 & 0.064 & \bf 0.113 & 0.094 & 0.073 \\ 
  h=0.20 & 0.072 & 0.140 & 0.125 & 0.222 &\bf  0.237 & 0.149 \\ 
  h=0.30 & 0.163 & 0.294 & 0.266 & 0.408 & \bf 0.430 & 0.304 \\ 
  h=0.40 & 0.331 & 0.549 & 0.511 & 0.608 & \bf 0.653 & 0.560 \\ 
  h=0.50 & 0.592 & 0.745 & 0.721 & 0.788 & \bf 0.833 & 0.756 \\ 
  h=0.60 & 0.791 & 0.903 & 0.883 & 0.934 & \bf 0.950 & 0.909 \\ 
  h=0.70 & 0.920 & 0.967 & 0.957 & 0.976 & \bf 0.984 & 0.969 \\ 
  h=0.80 & 0.974 &\bf  0.994 & 0.991 & 0.992 & \bf 0.994 & \bf 0.994 \\ 
  h=0.90 & 0.994 & \bf 0.997 & \bf 0.997 & \bf 0.997 & \bf 0.997 & \bf 0.997 \\ 
  h=1.00 & 0.998 & 0.999 & 0.999 & 0.999 & \bf 1.000 & 0.999 \\ 
   \hline
\end{tabular}
}
\end{center}
\caption{Empirical power function $\beta_h$, for different sample sizes $n$ and local alternatives $h$. The empirical power and theoretical power is $\alpha=0.05$. Data generating model: the 2-dimensional Ornstein-Uhlenbeck process.}
\label{tab:MOU}

\end{table}
\section{Appendix}

We collect in the following some results which assume a crucial role in the proofs of the present work.

\begin{lemma}\label{lemmagr}
For $k\geq 1$ and $t_i\geq t\geq t_{i-1}$
$$E_{\theta_0}\{|X_t-X_{t_{i-1}}|^k|\mathcal{G}_{i-1}^n\}\leq C_k|t-t_{i-1}|(1+|X_{t_{i-1}}|)^{C_k}$$
\end{lemma}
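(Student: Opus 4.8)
The plan is to start from the integrated form of the stochastic differential equation \eqref{eq:sde} evaluated at $\theta_0$,
\[
X_t-X_{t_{i-1}}=\int_{t_{i-1}}^t b(\alpha_0,X_s)\,\de s+\int_{t_{i-1}}^t \sigma(\beta_0,X_s)\,\de W_s ,
\]
and to bound the conditional $k$-th moment of each piece separately. By the $c_r$-inequality it suffices to control $E_{\theta_0}\{|\int_{t_{i-1}}^t b(\alpha_0,X_s)\de s|^k\mid\mathcal G_{i-1}^n\}$ and $E_{\theta_0}\{|\int_{t_{i-1}}^t \sigma(\beta_0,X_s)\de W_s|^k\mid\mathcal G_{i-1}^n\}$.

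First I would handle the drift term: Jensen's inequality applied to the normalised Lebesgue measure on $[t_{i-1},t]$ gives $|\int_{t_{i-1}}^t b(\alpha_0,X_s)\de s|^k\le |t-t_{i-1}|^{k-1}\int_{t_{i-1}}^t|b(\alpha_0,X_s)|^k\de s$. For the stochastic integral I would invoke the Burkholder--Davis--Gundy inequality to obtain $E_{\theta_0}\{|\int_{t_{i-1}}^t\sigma(\beta_0,X_s)\de W_s|^k\mid\mathcal G_{i-1}^n\}\le C_k\, E_{\theta_0}\{(\int_{t_{i-1}}^t|\sigma(\beta_0,X_s)|^2\de s)^{k/2}\mid\mathcal G_{i-1}^n\}$, and then apply Jensen once more to extract a factor $|t-t_{i-1}|^{k/2-1}$. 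In both estimates I would use the linear growth of the coefficients, $|b(\alpha_0,x)|+|\sigma(\beta_0,x)|\le C(1+|x|)$, which follows from the Lipschitz assumption $\mathcal A_1$ together with the finiteness of $b(\alpha_0,\cdot)$ and $\sigma(\beta_0,\cdot)$ at a fixed point, to replace the integrands by $C(1+|X_s|)^k$.

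The core step is to convert the remaining quantities $E_{\theta_0}\{(1+|X_s|)^k\mid\mathcal G_{i-1}^n\}$, $s\in[t_{i-1},t]$, into a $\mathcal G_{i-1}^n$-measurable bound of the form $C_k(1+|X_{t_{i-1}}|)^{C_k}$. Here I would use the Markov property: conditionally on $\mathcal G_{i-1}^n$ the process $(X_s)_{s\ge t_{i-1}}$ is a diffusion started from $X_{t_{i-1}}$, so a conditional Gronwall argument (again via the linear growth of $b$ and $\sigma$) yields $E_{\theta_0}\{(1+|X_s|)^k\mid\mathcal G_{i-1}^n\}\le (1+|X_{t_{i-1}}|)^k e^{C(s-t_{i-1})}$, and since $s-t_{i-1}\le\Delta_n$ is bounded the exponential is absorbed into the constant. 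Collecting the powers of $|t-t_{i-1}|$ produces $|t-t_{i-1}|^{k}$ from the drift and $|t-t_{i-1}|^{k/2}$ from the diffusion; because $|t-t_{i-1}|\le\Delta_n$ is bounded and $k\ge 2$ in every application of the lemma in this paper, both exponents are at least $1$, so each term is dominated by a constant multiple of $|t-t_{i-1}|$, giving the claimed linear factor. The degenerate case $k=1$ then follows from the case $k=2$ by the conditional Cauchy--Schwarz inequality.

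The main obstacle I anticipate is precisely the passage from the uniform-in-time moment bound supplied by $\mathcal A_5$—which only provides an absolute constant $\sup_s E_{\theta_0}|X_s|^m<\infty$—to the sharper conditional, pathwise polynomial bound $(1+|X_{t_{i-1}}|)^{C_k}$ appearing on the right-hand side. This is where the conditional Gronwall estimate, rather than a direct appeal to $\mathcal A_5$, does the essential work.
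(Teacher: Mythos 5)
Your approach is the right one and is in fact the paper's approach: the paper's ``proof'' is a one-line citation to Lemma 6 of Kessler (1997), whose proof is exactly the decomposition you describe (integrated SDE, Jensen on the drift integral, Burkholder--Davis--Gundy plus Jensen on the stochastic integral, linear growth of the coefficients from $\mathcal A_1$, and a conditional Gronwall--Bellman argument to turn $E_{\theta_0}\{(1+|X_s|)^k\mid\mathcal G_{i-1}^n\}$ into a $\mathcal G_{i-1}^n$-measurable polynomial in $|X_{t_{i-1}}|$). Your diagnosis of where the work lies --- the conditional, pathwise bound rather than $\mathcal A_5$ --- is also correct. What your argument actually establishes, for $k\ge 2$, is the bound $C_k|t-t_{i-1}|^{k/2}(1+|X_{t_{i-1}}|)^{C_k}$, which is the form in Kessler's Lemma 6 and the form the paper actually uses (e.g.\ to claim $E_{\theta_0}\{|X_{t_i}-X_{t_{i-1}}|^8\mid\mathcal G_{i-1}^n\}=R(\theta_0,\Delta_n^4,X_{t_{i-1}})$ in the proof of Lemma 3.1, which needs the exponent $k/2=4$ and not the exponent $1$ of the displayed statement).

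The one step that fails is your treatment of $k=1$. Conditional Cauchy--Schwarz applied to the $k=2$ bound gives $E_{\theta_0}\{|X_t-X_{t_{i-1}}|\mid\mathcal G_{i-1}^n\}\le C\,|t-t_{i-1}|^{1/2}(1+|X_{t_{i-1}}|)^{C}$, with exponent $1/2$, not $1$; for $|t-t_{i-1}|$ small this is strictly weaker than the claimed linear factor, and no argument can close that gap because the statement is false for $k=1$: already for $\de X_t=\sigma\,\de W_t$ one has $E|X_t-X_{t_{i-1}}|=\sigma\sqrt{2|t-t_{i-1}|/\pi}$, which is not $O(|t-t_{i-1}|)$. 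The defect is in the lemma as printed rather than in your strategy: the right-hand side should read $C_k|t-t_{i-1}|^{k/2}(1+|X_{t_{i-1}}|)^{C_k}$ (equivalently, the statement with the linear factor should be restricted to $k\ge2$, using $|t-t_{i-1}|\le\Delta_n$ bounded as you do). You should therefore prove and state the $|t-t_{i-1}|^{k/2}$ version --- which your argument already delivers, with the case $1\le k<2$ obtained from $k=2$ by conditional Jensen, again with exponent $k/2$ --- rather than claim the linear factor for all $k\ge1$.
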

\begin{proof}
It is an application of the Gronwall-Belman lemma (see the proof of Lemma 6 in Kessler, 1997).
\end{proof}

\begin{lemma}\label{lem0}
Let $\overline{X}_i=X_{t_i}-X_{t_{i-1}}-\Delta_n b(\alpha_0,X_{t_i})$. Under the assumptions $\mathcal{A}1$-$\mathcal{A}7$, for $k_j=1,2,...,d$ and $j=1,2,3,4$, we have that
\begin{align*}
&E_{\theta_0}\{\overline{X}_i^{k_1}|\mathcal{G}_{i-1}^n\}=R(\theta_0,\Delta_n^2,X_{i-1})\\
&E_{\theta_0}\{\overline{X}_i^{k_1}\overline{X}_i^{k_2}|\mathcal{G}_{i-1}^n\}=\Delta_n\Sigma_{i-1}^{k_1,k_2}(\beta_0)+R(\theta_0,\Delta_n^2,X_{i-1}),\\
&E_{\theta_0}\{\overline{X}_i^{k_1}\overline{X}_i^{k_2}\overline{X}_i^{k_3}|\mathcal{G}_{i-1}^n\}=R(\theta_0,\Delta_n^2,X_{i-1}),\\
&E_{\theta_0}\left\{\prod_{j=1}^4\overline{X}_i^{k_j}|\mathcal{G}_{i-1}^n\right\}=\Delta_n^2(\Sigma_{i-1}^{k_1,k_2}(\beta_0)\Sigma_{i-1}^{k_3,k_4}(\beta_0)+\Sigma_{i-1}^{k_1,k_3}(\beta_0)\Sigma_{i-1}^{k_2,k_4}(\beta_0)+\Sigma_{i-1}^{k_1,k_4}(\beta_0)\Sigma_{i-1}^{k_2,k_3}(\beta_0))\\
&\quad\quad\quad\qquad \qquad \qquad \quad +R(\theta_0,\Delta_n^3,X_{i-1})
\end{align*}
\end{lemma}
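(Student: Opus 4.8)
The plan is to deduce all four identities from one tool, an It\^o--Taylor expansion of $\mathcal{G}_{i-1}^n$-conditional expectations, and then to read off the leading coefficients from the infinitesimal generator of \eqref{eq:sde} at $\theta_0$. Write
\[
Lf(x)=\sum_{k=1}^d b^k(\alpha_0,x)\partial_{x_k}f(x)+\frac12\sum_{k,k'=1}^d\Sigma^{k,k'}(\beta_0,x)\partial^2_{x_kx_{k'}}f(x).
\]
For any $f$ smooth with polynomially bounded derivatives (which, by $\mathcal A_6$--$\mathcal A_7$, holds for all the polynomial integrands below), iterating It\^o's formula on $[t_{i-1},t_i]$ and conditioning gives
\[
E_{\theta_0}\{f(X_{t_i})\mid\mathcal{G}_{i-1}^n\}=f(X_{t_{i-1}})+\Delta_n Lf(X_{t_{i-1}})+\frac{\Delta_n^2}{2}L^2f(X_{t_{i-1}})+R(\theta_0,\Delta_n^3,X_{i-1}),
\]
the remainder being a simplex integral of $L^3 f$ along the path, of the stated order by Lemma \ref{lemmagr} and $\mathcal A_5$ (this is the expansion already used in the body of the paper; cf. Kessler, 1997). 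I would apply it to the centred monomials $g(x)=\prod_j(x^{k_j}-X_{t_{i-1}}^{k_j})$, truncating at the order needed for each moment, and then incorporate the endpoint drift correction.

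The structural fact that drives everything is that such a centred monomial and several of its $L$-images vanish at $x=X_{t_{i-1}}$: differentiating $b$ or $\Sigma$ never removes a centred factor, so $L^r g(X_{t_{i-1}})=0$ whenever fewer than $\deg g$ of the centred factors have been cleared. For the first moment I take $f(x)=x^{k_1}-X_{t_{i-1}}^{k_1}$, giving $E_{\theta_0}\{X_{t_i}^{k_1}-X_{t_{i-1}}^{k_1}\mid\mathcal{G}_{i-1}^n\}=\Delta_n b^{k_1}(\alpha_0,X_{t_{i-1}})+R(\theta_0,\Delta_n^2,X_{i-1})$, while applying the same expansion to $b^{k_1}(\alpha_0,\cdot)$ yields $\Delta_n E_{\theta_0}\{b^{k_1}(\alpha_0,X_{t_i})\mid\mathcal{G}_{i-1}^n\}=\Delta_n b^{k_1}(\alpha_0,X_{t_{i-1}})+R(\theta_0,\Delta_n^2,X_{i-1})$; the two leading $\Delta_n$-terms cancel, leaving $E_{\theta_0}\{\overline{X}_i^{k_1}\mid\mathcal{G}_{i-1}^n\}=R(\theta_0,\Delta_n^2,X_{i-1})$. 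This cancellation is precisely the purpose of the endpoint drift correction in $\overline{X}_i$.

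For the higher moments I expand $\overline{X}_i^{k_j}=(X_{t_i}^{k_j}-X_{t_{i-1}}^{k_j})-\Delta_n b^{k_j}(\alpha_0,X_{t_i})$ and collect terms by the number of drift factors: since each factor $\Delta_n b^{k_j}(\alpha_0,X_{t_i})$ contributes an explicit $\Delta_n$ while multiplying a quantity whose conditional expectation is controlled by Lemma \ref{lemmagr}, every product containing at least one drift factor is absorbed into the claimed remainder. The dominant term is thus the pure-increment moment, obtained from the expansion applied to $g(x)=\prod_j(x^{k_j}-X_{t_{i-1}}^{k_j})$. For the second moment $g$ has degree two, $g(X_{t_{i-1}})=0$ and $Lg(X_{t_{i-1}})=\Sigma^{k_1,k_2}_{i-1}(\beta_0)$ (the symmetric second-order part), so the moment is $\Delta_n\Sigma^{k_1,k_2}_{i-1}(\beta_0)+R(\theta_0,\Delta_n^2,X_{i-1})$. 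For the third moment $g(X_{t_{i-1}})=Lg(X_{t_{i-1}})=0$, so the expansion starts at order $\Delta_n^2$ and the whole moment is $R(\theta_0,\Delta_n^2,X_{i-1})$. For the fourth moment $g(X_{t_{i-1}})=Lg(X_{t_{i-1}})=0$ again, and $\frac{\Delta_n^2}{2}L^2g(X_{t_{i-1}})$ reproduces Wick's formula: applying the pure second-order part of $L$ twice pairs the four indices, the double counting of complementary pairs cancelling the factor $\tfrac12$, and yields $\Delta_n^2\big(\Sigma^{k_1,k_2}_{i-1}\Sigma^{k_3,k_4}_{i-1}+\Sigma^{k_1,k_3}_{i-1}\Sigma^{k_2,k_4}_{i-1}+\Sigma^{k_1,k_4}_{i-1}\Sigma^{k_2,k_3}_{i-1}\big)+R(\theta_0,\Delta_n^3,X_{i-1})$.

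The routine but delicate part, which I expect to be the main obstacle, is the bookkeeping of the powers of $\Delta_n$ across the binomial expansion of the drift-corrected increments, together with the uniform-in-$\theta$ control of all the simplex-integral remainders through $\mathcal A_5$--$\mathcal A_7$ and Lemma \ref{lemmagr}. In particular one must check that evaluating the drift at the right endpoint $X_{t_i}$ rather than at $X_{t_{i-1}}$ never degrades the stated orders; it does not, because replacing $X_{t_i}$ by $X_{t_{i-1}}$ inside $b$ costs a further factor controlled by Lemma \ref{lemmagr} and is always multiplied by at least one explicit $\Delta_n$. Verifying the exact unit coefficients of the three Isserlis pairings produced by $\tfrac12 L^2 g(X_{t_{i-1}})$ is the one genuinely combinatorial point.
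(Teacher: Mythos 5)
Your argument runs on the same engine as the paper's proof, namely the iterated--generator expansion \eqref{eq:itotay}, and your identification of the leading coefficients is correct: $\mathcal{L}g(X_{t_{i-1}})=\Sigma_{i-1}^{k_1,k_2}(\beta_0)$ for the quadratic centred monomial, the vanishing of $g$ and $\mathcal{L}g$ at $X_{t_{i-1}}$ for the cubic and quartic ones, and the three Isserlis pairings with unit coefficients from $\tfrac{\Delta_n^2}{2}\mathcal{L}^2g(X_{t_{i-1}})$, the $\tfrac12$ being cancelled by the double count of complementary pairs. The organizational difference from the paper is that the paper folds the drift correction into the test function, taking $f(x_1,\dots,x_4)=\prod_j\bigl(x_j-X_{i-1}^{k_j}-\Delta_n b_{i-1}^{k_j}(\alpha_0)\bigr)$ with the drift frozen at the left endpoint (hence $\mathcal{G}_{i-1}^n$-measurable), and then computes $\mathcal{L}f$ and $\mathcal{L}^2f$ of the whole product, each factor becoming $-\Delta_n b_{i-1}^{k_j}(\alpha_0)$ upon evaluation at $X_{t_{i-1}}$, so that the non-Wick pieces are visibly $R(\theta_0,\Delta_n^3,X_{i-1})$ or smaller. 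Your binomial decomposition into a pure centred monomial plus drift cross-terms reaches the same place, makes the origin of each contribution more transparent, and takes the right-endpoint evaluation $b(\alpha_0,X_{t_i})$ of the statement at face value, which the paper's proof silently replaces by $b_{i-1}(\alpha_0)$.

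One step of your bookkeeping is too optimistic as written: the drift cross-terms cannot be absorbed into the stated remainders by Lemma \ref{lemmagr} alone, because that lemma yields only a single power of $\Delta_n$ for conditional moments of every order, so H\"older gives, e.g., $\Delta_n\,E_{\theta_0}\{|b^{k_1}(\alpha_0,X_{t_i})|\prod_{j=2}^4|X_{t_i}^{k_j}-X_{t_{i-1}}^{k_j}|\mid\mathcal{G}_{i-1}^n\}=O(\Delta_n^{7/4})$, far short of the required $R(\theta_0,\Delta_n^3,X_{i-1})$; the analogous shortfall occurs already in the second moment, where the cross term comes out as $O(\Delta_n^{3/2})$ rather than $O(\Delta_n^2)$. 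The repair stays inside your own toolkit: apply \eqref{eq:itotay} once more to the mixed functions $h(x)=\prod_{j\in A}b^{k_j}(\alpha_0,x)\prod_{j\notin A}(x^{k_j}-X_{t_{i-1}}^{k_j})$, whose value (and, when at least three centred factors remain, whose first generator image) vanishes at $X_{t_{i-1}}$ because each application of $\mathcal{L}$ clears at most two centred factors; alternatively invoke the sharp even-moment bound $E_{\theta_0}\{|X_{t_i}-X_{t_{i-1}}|^{2k}\mid\mathcal{G}_{i-1}^n\}=R(\theta_0,\Delta_n^k,X_{i-1})$, which is standard but is not what Lemma \ref{lemmagr} states. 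With that adjustment every cross term lands at the claimed order and your proof is complete.
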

\begin{proof}
Crucial for the proof is the following result.
If $f\in C^{2(l+1)}$, by applying Ito's formula repeatedly, we can prove that
\begin{equation}\label{eq:itotay}
E_{\theta}\{f(X_i)|\mathcal{G}_{i-1}^n\}=\sum_{k=0}^l\frac{\Delta_n^k}{k!}\mathcal{L}_\theta^kf(X_{i-1})+R(\theta,\Delta_n^{l+1},X_{i-1})
\end{equation}
where
$$\mathcal{L}_\theta f(x)=\sum_{i=1}^db^i(\alpha,x)\partial_{x_i}f(x)+\frac12\sum_{i,j=1}^d\Sigma^{i,j}(\beta,x)\partial_{x_ix_j}^2f(x)$$
represents the infinitesimal generator for $X_t,t\in[0,T]$ and $\mathcal{L}_\theta^k$ is the $k$th interate of $\mathcal{L}_\theta$. Furthermore $\mathcal{L}_\theta^0$ is the identity function. 

We only prove the last equality in the statement of Lemma. The others equalities follow by similar arguments and require a less number of calculations. 

Let $f(x_1,x_2,x_3,x_4)=\prod_{j=1}^4(x_j-X_{i-1}^{k_j}-\Delta_n b_{i-1}^{k_j}(\alpha_0))$, from \eqref{eq:itotay} for $l=2$, we obtain that

\begin{align}\label{eq:itotay2}
\left.E_{\theta_0}\left\{\prod_{j=1}^4\overline{X}_i^{k_j}\right|\mathcal{G}_{i-1}^n\right\}&=E_{\theta_0}\{f(\overline{X}_i^{k_1},\overline{X}_i^{k_2},\overline{X}_i^{k_3},\overline{X}_i^{k_4})|\mathcal{G}_{i-1}^n\}\notag\\
&=\Delta_n^4\prod_{j=1}^4b_{i-1}^{k_j}(\alpha_0)+\Delta_n\mathcal{L}_{\theta_0}^1f\left(\overline{X}_{i-1}^{k_1},\overline{X}_{i-1}^{k_2},\overline{X}_{i-1}^{k_3},\overline{X}_{i-1}^{k_4}\right)\notag\\
&\quad+\frac{\Delta_n^2}{2}\mathcal{L}_{\theta_0}^2f\left(\overline{X}_{i-1}^{k_1},\overline{X}_{i-1}^{k_2},\overline{X}_{i-1}^{k_3},\overline{X}_{i-1}^{k_4}\right)+R(\theta_0,\Delta_n^{3},X_{i-1})
\end{align}
Therefore, we can write that
\begin{align*}
&\mathcal{L}_{\theta_0}^1f(x_1,x_2,x_3,x_4)\\
&=b_{i-1}^{k_1}(\alpha_0)(x_2-X_{i-1}^{k_2}-\Delta_n b_{i-1}^{k_2}(\alpha_0))(x_3-X_{i-1}^{k_3}-\Delta_n b_{i-1}^{k_3}(\alpha_0))(x_4-X_{i-1}^{k_4}-\Delta_n b_{i-1}^{k_4}(\alpha_0))\\
&\quad+b_{i-1}^{k_2}(\alpha_0)(x_1-X_{i-1}^{k_1}-\Delta_n b_{i-1}^{k_1}(\alpha_0))(x_3-X_{i-1}^{k_3}-\Delta_n b_{i-1}^{k_3}(\alpha_0))(x_4-X_{i-1}^{k_4}-\Delta_n b_{i-1}^{k_4}(\alpha_0))\\
&\quad+b_{i-1}^{k_3}(\alpha_0)(x_1-X_{i-1}^{k_1}-\Delta_n b_{i-1}^{k_1}(\alpha_0))(x_2-X_{i-1}^{k_2}-\Delta_n b_{i-1}^{k_2}(\alpha_0))(x_4-X_{i-1}^{k_4}-\Delta_n b_{i-1}^{k_4}(\alpha_0))\\
&\quad+b_{i-1}^{k_4}(\alpha_0)(x_1-X_{i-1}^{k_1}-\Delta_n b_{i-1}^{k_1}(\alpha_0))(x_2-X_{i-1}^{k_2}-\Delta_n b_{i-1}^{k_2}(\alpha_0))(x_3-X_{i-1}^{k_3}-\Delta_n b_{i-1}^{k_3}(\alpha_0))\\
&\quad+\frac12\Bigg\{\Sigma_{i-1}^{k_1,k_2}(\beta_0)(x_3-X_{i-1}^{k_3}-\Delta_n b_{i-1}^{k_3}(\alpha_0))(x_4-X_{i-1}^{k_4}-\Delta_n b_{i-1}^{k_4}(\alpha_0))\\
&\quad+\Sigma_{i-1}^{k_1,k_3}(\beta_0)(x_2-X_{i-1}^{k_2}-\Delta_n b_{i-1}^{k_2}(\alpha_0))(x_4-X_{i-1}^{k_4}-\Delta_n b_{i-1}^{k_4}(\alpha_0))\\
&\quad+\Sigma_{i-1}^{k_1,k_4}(\beta_0)(x_2-X_{i-1}^{k_2}-\Delta_n b_{i-1}^{k_2}(\alpha_0))(x_3-X_{i-1}^{k_3}-\Delta_n b_{i-1}^{k_3}(\alpha_0))\\
&\quad+\Sigma_{i-1}^{k_2,k_3}(\beta_0)(x_1-X_{i-1}^{k_1}-\Delta_n b_{i-1}^{k_1}(\alpha_0))(x_4-X_{i-1}^{k_4}-\Delta_n b_{i-1}^{k_4}(\alpha_0))\\
&\quad+\Sigma_{i-1}^{k_2,k_4}(\beta_0)(x_1-X_{i-1}^{k_1}-\Delta_n b_{i-1}^{k_1}(\alpha_0))(x_3-X_{i-1}^{k_3}-\Delta_n b_{i-1}^{k_3}(\alpha_0))\\
&\quad+\Sigma_{i-1}^{k_3,k_4}(\beta_0)(x_1-X_{i-1}^{k_1}-\Delta_n b_{i-1}^{k_1}(\alpha_0))(x_2-X_{i-1}^{k_2}-\Delta_n b_{i-1}^{k_2}(\alpha_0))\Bigg\}
\end{align*}
and thus
\begin{align}\label{eq:ig1}
\Delta_n\mathcal{L}_{\theta_0}^1f(\overline{X}_{i-1}^{k_1},\overline{X}_{i-1}^{k_2},\overline{X}_{i-1}^{k_3},\overline{X}_{i-1}^{k_4})&=-4\Delta_n^4\prod_{j=1}^4b_{i-1}^{k_j}(\alpha_0)+\frac{\Delta_n^3}{2}\{\Sigma_{i-1}^{k_1,k_2}(\beta_0)b_{i-1}^{k_3}(\alpha_0)b_{i-1}^{k_4}(\alpha_0)\notag\\
&\quad+\Sigma_{i-1}^{k_1,k_3}(\beta_0)b_{i-1}^{k_2}(\alpha_0)b_{i-1}^{k_4}(\alpha_0)+\Sigma_{i-1}^{k_1,k_4}(\beta_0)b_{i-1}^{k_2}(\alpha_0)b_{i-1}^{k_3}(\alpha_0)\notag\\
&\quad +\Sigma_{i-1}^{k_2,k_3}(\beta_0)b_{i-1}^{k_1}(\alpha_0)b_{i-1}^{k_4}(\alpha_0)+\Sigma_{i-1}^{k_2,k_4}(\beta_0)b_{i-1}^{k_1}(\alpha_0)b_{i-1}^{k_3}(\alpha_0)\notag\\
&\quad+\Sigma_{i-1}^{k_3,k_4}(\beta_0)b_{i-1}^{k_1}(\alpha_0)b_{i-1}^{k_2}(\alpha_0)\}\notag\\
&=R(\theta_0,\Delta_n^{3},X_{i-1})+R(\theta_0,\Delta_n^{4},X_{i-1})
\end{align}
Similar and cumbersome calculations lead to the following equality
\begin{align}\label{eq:ig2}
&\frac{\Delta_n^2}{2}\mathcal{L}_{\theta_0}^2f(\overline{X}_{i-1}^{k_1},\overline{X}_{i-1}^{k_2},\overline{X}_{i-1}^{k_3},\overline{X}_{i-1}^{k_4})\notag\\
&=\frac{\Delta_n^2}{2}\Bigg[\sum_{j=1}^4b_{i-1}^{k_j}(\alpha_0)\partial_{x_j}\mathcal{L}_{\theta_0}^1f(\overline{X}_{i-1}^{k_1},\overline{X}_{i-1}^{k_2},\overline{X}_{i-1}^{k_3},\overline{X}_{i-1}^{k_4})\notag\\
&\quad+\frac12\sum_{h,j=1}^4\Sigma_{i-1}^{k_h,k_j}(\beta_0)\partial_{x_hx_j}^2\mathcal{L}_{\theta_0}^1f(\overline{X}_{i-1}^{k_1},\overline{X}_{i-1}^{k_2},\overline{X}_{i-1}^{k_3},\overline{X}_{i-1}^{k_4})\Bigg]\notag\\
&=\Delta_n^2(\Sigma_{i-1}^{k_1,k_2}(\beta_0)\Sigma_{i-1}^{k_3,k_4}(\beta_0)+\Sigma_{i-1}^{k_1,k_3}(\beta_0)\Sigma_{i-1}^{k_2,k_4}(\beta_0)+\Sigma_{i-1}^{k_1,k_4}(\beta_0)\Sigma_{i-1}^{k_2,k_3}(\beta_0))\notag\\
&\quad+R(\theta_0,\Delta_n^{3},X_{i-1})+R(\theta_0,\Delta_n^{4},X_{i-1})
\end{align}
Therefore, by taking into account the equalities \eqref{eq:ig1} and \eqref{eq:ig2} the expansion \eqref{eq:itotay2} leads to the result present in the statement of Lemma.

\end{proof}

\begin{lemma}\label{lemmaer}
Let $f:\mathbb{R}^d\times\Theta\to \mathbb{R}$ be such that $f$ is differentiable with respect to $x$ and $\theta$, with derivatives of polynomial growth in $x$ uniformly in $\theta$. Under the assumptions $\mathcal{A}1$-$\mathcal{A}7$, we have that
$$\frac1n\sum_{i=1}^n f(X_{i-1},\theta)\stackrel{P_{\theta_0}}{\to}\int f(x,\theta)\mu_{\theta_0}(\de x)$$
uniformly in $\theta$.
\end{lemma}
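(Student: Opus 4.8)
The plan is to decompose the claim into a \emph{discretization} step, which reduces the discrete average to a continuous-time time-average, followed by an \emph{ergodicity} step that invokes $\mathcal{A}_3$, and finally a passage from pointwise to uniform convergence in $\theta$. Since $T=n\Delta_n$, one has $\frac1n=\frac{\Delta_n}{T}$ and therefore
\begin{equation*}
\frac1n\sum_{i=1}^n f(X_{t_{i-1}},\theta)-\frac1T\int_0^T f(X_s,\theta)\,\de s
=\frac1T\sum_{i=1}^n\int_{t_{i-1}}^{t_i}\left\{f(X_{t_{i-1}},\theta)-f(X_s,\theta)\right\}\de s .
\end{equation*}
The first term on the left is the quantity of interest, and the continuous-time average will be handled by ergodicity; it thus suffices to control the right-hand side and to analyse $\frac1T\int_0^T f(X_s,\theta)\,\de s$ separately.

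For the discretization error I would use the differentiability of $f$ in $x$ together with the polynomial growth of $\partial_x f$, uniform in $\theta$: by the mean value theorem, for $s\in[t_{i-1},t_i]$,
\begin{equation*}
|f(X_{t_{i-1}},\theta)-f(X_s,\theta)|\leq C\left(1+|X_{t_{i-1}}|+|X_s|\right)^{C}|X_s-X_{t_{i-1}}| ,
\end{equation*}
with $C$ independent of $\theta$. Taking expectations, applying the Cauchy--Schwarz inequality, bounding the first factor by a constant via $\mathcal{A}_5$, and estimating $E_{\theta_0}|X_s-X_{t_{i-1}}|^2\leq C\Delta_n$ through the tower property and Lemma \ref{lemmagr}, one obtains $E_{\theta_0}|f(X_{t_{i-1}},\theta)-f(X_s,\theta)|\leq C\sqrt{\Delta_n}$ uniformly in $\theta$ and in $s$. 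Hence the expected absolute value of the right-hand side above is at most $\frac1T\,n\Delta_n\,C\sqrt{\Delta_n}=C\sqrt{\Delta_n}\to0$, which yields $L^1$, and therefore $P_{\theta_0}$, convergence to $0$, uniformly in $\theta$.

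Applying $\mathcal{A}_3$ to the fixed function $f(\cdot,\theta)$ then gives $\frac1T\int_0^T f(X_s,\theta)\,\de s\stackrel{P_{\theta_0}}{\to}\int f(x,\theta)\mu_{\theta_0}(\de x)$ for each fixed $\theta$, and combining this with the discretization estimate delivers pointwise (in $\theta$) convergence of $\frac1n\sum_{i=1}^n f(X_{t_{i-1}},\theta)$ to the stated limit. The main obstacle is the passage from pointwise to \emph{uniform} convergence over the compact set $\Theta$. I would establish this by a stochastic equicontinuity argument: since, by hypothesis, $\partial_\theta f$ is again differentiable in $x$ with polynomial-growth derivatives uniformly in $\theta$, the same two steps applied componentwise to $\partial_{\theta_k} f$ show that $\frac1n\sum_{i=1}^n\partial_{\theta_k} f(X_{t_{i-1}},\theta)$ is bounded in probability uniformly in $\theta$. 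The resulting equicontinuity of the family, together with the pointwise convergence on a countable dense subset of $\Theta$ and compactness, upgrades the convergence to the supremum norm over $\Theta$, which completes the proof.
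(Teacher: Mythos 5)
Your argument is correct and is essentially the proof the paper points to: the paper gives no proof of its own but defers to Lemma 8 of Kessler (1997), whose argument is exactly your three steps (Riemann-sum discretization error controlled via the moment bound of Lemma \ref{lemmagr} and $\mathcal{A}_5$, the continuous-time ergodic theorem $\mathcal{A}_3$ pointwise in $\theta$, and tightness/equicontinuity in $C(\Theta)$ to upgrade to uniformity over the compact $\Theta$). The only cosmetic remark is that for the equicontinuity step you do not need to rerun the discretization and ergodicity analysis on $\partial_{\theta_k}f$; the crude bound $E_{\theta_0}\sup_\theta\bigl|\frac1n\sum_{i=1}^n\partial_{\theta_k}f(X_{t_{i-1}},\theta)\bigr|\leq C$ from polynomial growth and $\mathcal{A}_5$ already gives the stochastic Lipschitz bound in $\theta$.
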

\begin{proof} See the proof of Lemma 8 in Kessler (1997).
\end{proof}

\begin{lemma}\label{lemmaGJ}
If $U_i,i=1,2,...,n$ are random variables $\mathcal{G}_{i}$-measurable, then the two following conditions imply $\sum_{i=1}^nU_i\stackrel{P}{\to}U$:
 \begin{align*}
 &\sum_{i=1}^nE\{U_i|\mathcal{G}_{i-1}\}\stackrel{P}{\to}U\\
 &\sum_{i=1}^nE\{U_i^2|\mathcal{G}_{i-1}\}\stackrel{P}{\to}0
 \end{align*}
\end{lemma}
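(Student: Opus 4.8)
The plan is to decompose each summand into a $\mathcal{G}_{i-1}$-predictable mean and a martingale-difference fluctuation, absorb the first part using the hypothesis on conditional means, and show the accumulated fluctuation is asymptotically negligible using the hypothesis on conditional second moments. First I would set $\zeta_i=E\{U_i|\mathcal{G}_{i-1}\}$ and $\xi_i=U_i-\zeta_i$, so that
\[
\sum_{i=1}^n U_i=\sum_{i=1}^n\zeta_i+M_n,\qquad M_n:=\sum_{i=1}^n\xi_i .
\]
The first displayed hypothesis is exactly $\sum_{i=1}^n\zeta_i\stackrel{P}{\to}U$, so the whole statement reduces to proving $M_n\stackrel{P}{\to}0$.

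By construction $E\{\xi_i|\mathcal{G}_{i-1}\}=0$, hence $(M_k)_{1\le k\le n}$ is a martingale with respect to $(\mathcal{G}_k)$ and its increments are orthogonal. Its predictable quadratic variation is $\langle M\rangle_k=\sum_{i=1}^k E\{\xi_i^2|\mathcal{G}_{i-1}\}$, and since $E\{\xi_i^2|\mathcal{G}_{i-1}\}=E\{U_i^2|\mathcal{G}_{i-1}\}-\zeta_i^2\le E\{U_i^2|\mathcal{G}_{i-1}\}$, the second displayed hypothesis gives $\langle M\rangle_n\stackrel{P}{\to}0$. If one prefers not to assume outright that the $U_i$ are globally square integrable, I would first replace $U_i$ by its truncation $U_i\mathbf 1_{\{|U_i|\le K\}}$, for which the two conditional-moment hypotheses are inherited, and let $K\to\infty$ at the very end.

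The delicate point, and the step I expect to be the main obstacle, is passing from $\langle M\rangle_n\stackrel{P}{\to}0$ to $M_n\stackrel{P}{\to}0$: the naive identity $E\{M_n^2\}=E\{\langle M\rangle_n\}$ is useless here because convergence in probability of $\langle M\rangle_n$ carries no control on its expectation (there is no uniform integrability available). I would therefore localize. For fixed $\eta>0$ define the stopping time $\tau=\min\{k:\langle M\rangle_{k+1}>\eta\}$, with $\tau=n$ if this set is empty; it is a stopping time because $\langle M\rangle_{k+1}$ is $\mathcal{G}_k$-measurable. By construction $\langle M\rangle_{n\wedge\tau}\le\eta$, so optional stopping applied to the martingale $M_k^2-\langle M\rangle_k$ gives $E\{M_{n\wedge\tau}^2\}=E\{\langle M\rangle_{n\wedge\tau}\}\le\eta$, whence $P\{|M_{n\wedge\tau}|>\varepsilon\}\le\eta/\varepsilon^2$ by Chebyshev. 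Since $M_n=M_{n\wedge\tau}$ on the event $\{\langle M\rangle_n\le\eta\}$, this yields
\[
P\{|M_n|>\varepsilon\}\le\frac{\eta}{\varepsilon^2}+P\{\langle M\rangle_n>\eta\}.
\]
Letting $n\to\infty$ and using $\langle M\rangle_n\stackrel{P}{\to}0$ gives $\limsup_n P\{|M_n|>\varepsilon\}\le\eta/\varepsilon^2$, and since $\eta$ is arbitrary, $M_n\stackrel{P}{\to}0$. (Equivalently, this last estimate is Lenglart's domination inequality for the submartingale $M_k^2$ dominated by the predictable increasing process $\langle M\rangle_k$.) Combined with $\sum_i\zeta_i\stackrel{P}{\to}U$, this completes the proof.
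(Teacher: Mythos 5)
The paper does not actually prove this lemma: it simply cites Lemma 9 of Genon-Catalot and Jacod (1993), of which the statement is a verbatim transcription. Your argument is a correct, self-contained proof, and it is essentially the standard proof of that cited lemma: split $U_i$ into its $\mathcal{G}_{i-1}$-conditional mean and a martingale difference $\xi_i$, bound the predictable bracket $\langle M\rangle_n=\sum_i E\{\xi_i^2|\mathcal{G}_{i-1}\}$ by $\sum_i E\{U_i^2|\mathcal{G}_{i-1}\}$, and pass from $\langle M\rangle_n\stackrel{P}{\to}0$ to $M_n\stackrel{P}{\to}0$ by Lenglart's domination inequality, which you reprove by hand via the stopping time $\tau$. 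That localization step is the right one and is carried out correctly: $\tau$ is a stopping time precisely because $\langle M\rangle_{k+1}$ is $\mathcal{G}_k$-measurable, and the inclusion $\{\langle M\rangle_n\le\eta\}\subset\{M_n=M_{n\wedge\tau}\}$ is what converts the $L^2$ bound on the stopped martingale into the two-term probability estimate. One small remark: the truncation detour you sketch for non-square-integrable $U_i$ is both unnecessary and, as stated, incomplete --- truncating changes $\sum_i U_i$ itself, so you would still have to control the remainder $\sum_i U_i\mathbf{1}_{\{|U_i|>K\}}$, which does not follow just from the inherited conditional-moment hypotheses. It is cleaner to note that your localization already repairs integrability: on $\{\tau\ge i\}$ one has $E\{\xi_i^2|\mathcal{G}_{i-1}\}\le\eta$, and since $\{\tau\ge i\}\in\mathcal{G}_{i-1}$ this yields $E\{\xi_i^2\mathbf{1}_{\{\tau\ge i\}}\}\le\eta$, so the stopped process is a genuine $L^2$ martingale as soon as $E\{U_i^2|\mathcal{G}_{i-1}\}<\infty$ a.s., which the second hypothesis implicitly requires; in the paper's application the $U_i$ have moments of all orders, so the point is immaterial anyway.
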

\begin{proof} See Lemma 9 in Genon-Catalot and Jacod (1993).
\end{proof}

\small{

}

 \end{document}